\documentclass[pdflatex,sn-mathphys-num]{sn-jnl}


\usepackage{graphicx}%
\usepackage{multirow}%
\usepackage{amsmath,amssymb,amsfonts}%
\usepackage{amsthm}%
\usepackage{mathrsfs}%
\usepackage[title]{appendix}%
\usepackage{xcolor}%
\usepackage{textcomp}%
\usepackage{manyfoot}%
\usepackage{booktabs}%
\usepackage{algorithm}%
\usepackage{algorithmicx}%
\usepackage{algpseudocode}%
\usepackage{listings}%

\usepackage{physics}
\usepackage{mathtools}
\mathtoolsset{showonlyrefs}
\usepackage{bm}



\theoremstyle{thmstyleone}%
\newtheorem{thm}{Theorem}[section]
\newtheorem{lemma}[thm]{Lemma}

\newtheorem{cor}[thm]{Corollary}
\newtheorem{remark}[thm]{Remark}%

\theoremstyle{thmstyletwo}%

\raggedbottom

\begin{document}

\title[Article Title]{Efficient LU factorization exploiting direct-indirect Burton--Miller equation for Helmholtz transmission problems}


\author*[1]{\fnm{Yasuhiro} \sur{Matsumoto}}\email{matsumoto@cii.isct.ac.jp}

\author[2]{\fnm{Kei} \sur{Matsushima}}\email{matsushima@acs.hiroshima-u.ac.jp}

\affil*[1]{\orgdiv{Center for Information Infrastructure}, \orgname{Institute of Science Tokyo}, \orgaddress{\street{2-12-1-I8-21 Ookayama}, \city{Meguro-ku}, \postcode{152-8550}, \state{Tokyo}, \country{Japan}}}

\affil*[2]{\orgdiv{Graduate School of Advanced Science and Engineering}, \orgname{Hiroshima University}, \orgaddress{\street{1-7-1 Kagamiyama}, \city{Higashi Hiroshima}, \postcode{739-8521}, \state{Hiroshima}, \country{Japan}}}

%
%
%
%
%


\abstract{
This paper proposes a direct-indirect mixed Burton--Miller boundary integral equation for solving Helmholtz scattering problems with transmissive scatterers.
The proposed formulation has three unknowns, one more than the number of unknowns for the ordinary formulation.
However, we can construct efficient numerical solvers based on LU factorization by exploiting the sparse alignment of the boundary integral operators of the proposed formulation.
Numerical examples demonstrate that the direct solver based on the proposed formulation is approximately 40\% faster than the ordinary formulation when the LU-factorization-based solver is used.
In addition, the proposed formulation is applied to a fast direct solver employing LU factorization in its algorithm.
In the application to the fast direct solver, the proxy method with a weak admissibility low-rank approximation is developed.
The speedup achieved using the proposed formulation is also shown to be effective in finding nonlinear eigenvalues, which are related to the uniqueness of the solution, in boundary value problems.
Furthermore, the well-posedness of the proposed boundary integral equation is established for scatterers with boundaries of class $C^2$, using the mapping property of boundary integral operators in H\"older space.
}

\keywords{Direct solvers, Boundary integral equations, Helmholtz transmission problems, Burton--Miller method}


\pacs[MSC Classification]{65R20, 65F05, 65F55}

\maketitle

\section{Introduction}
The wave scattering problem is an important mathematical model in physics and engineering.
Peculiar phenomena exhibited by non-Hermitian systems with respect to the wave scattering problem
have recently attracted attention \cite{ashida2020non, zhang2022review}.
These phenomena are related to the nonlinear eigenvalue problem for boundary value problems.
Studies on acoustics have investigated exceptional points \cite{matsushima2025exceptional}
and skin effects \cite{huang2024acoustic}.

Nonlinear eigenvalue problems can be handled using methods such as the Sakurai--Sugiura projection method \cite{asakura2009numerical}
and the adaptive Antoulas--Anderson algorithm \cite{nakatsukasa2018aaa},
which involve repeatedly solving boundary value problems with respect to complex parameters (e.g., complex frequencies).
To solve nonlinear eigenvalue problems for boundary value problems,
a fast and accurate numerical method is required
since an analytical solution is available only in a special case.
Furthermore, for these problems,
it is necessary to distinguish the computed eigenvalues that arise from the numerical method and are therefore unrelated to the original boundary value problem.
For example, when analyzing scattering problems that include an unbounded region,
domain-based methods such as the finite element method
require some absorbing boundary condition to deal with radiation conditions,
such as the perfectly matched layer \cite{berenger1994perfectly},
to approximate the unbounded region with a bounded computational region.
However, truncating the domain with the perfectly matched layer introduces fictitious eigenvalues;
therefore, care must be taken when handling this approximation \cite{ARAUJOC2021110024, kim2009computation}.
In contrast, numerical methods based on boundary integral equations do not need to truncate the computational domain, owing to the property of the fundamental solutions that satisfy the radiation condition.
Although these methods also suffer from fictitious eigenvalues stemming from their formulation, they can distinguish between true eigenvalues (i.e., those that originate from the boundary value problem itself) and fictitious eigenvalues simply by switching the fundamental solution to the inward solution in transmission problems \cite{misawa2017boundary}.
When analyzing Neumann or Dirichlet problems, we can distinguish eigenvalues that originate from the boundary value problem itself
by selecting a constant parameter in the boundary integral equations based on the Burton--Miller method \cite{ZHENG201543, burton1971application}.
The volume integral equation, namely the Lippmann--Schwinger equation, can be used to analyze scattering problems since it is free from fictitious eigenvalues \cite{ARAUJOC2021110024}.
However, its computational load is quite large.
In the view of the high-order method described below,
the domain-based spectral method \cite{gottlieb1977numerical} is attractive.
However, this method needs some absorbing boundary conditions to truncate an unbounded region to a bounded one in a scattering problem \cite{shapoval2019two}.
Thus, we are interested in numerical methods based on boundary integral equations.
Here, we refer to numerical methods based on boundary integral equations as
boundary integral equation methods.

Some high-order discretization methods and fast methods are available for boundary integral equation methods.
Since a coefficient matrix of discretized systems is dense in a boundary integral equation method,
the availability of these techniques is important.
High-order discretization methods have been applied to Laplace boundary value problems \cite{beale2024adjoint}, elastic wave scattering \cite{lai2022fast}, Maxwell's equation \cite{ganesh2025all}, and biharmonic wave scattering \cite{dong2024novel}.
They can obtain a numerical solution that converges at an algebraic rate of $O(h^{p})$, where $h$ is a representative length of the mesh and $p$ is some positive integer, making them fast.
Moreover, high-order discretization methods and fast methods such as the fast multipole method \cite{rokhlin1985rapid}
can be used simultaneously in some cases.
Fast methods can be broadly divided into two types: fast iterative solvers and fast direct solvers.
Fast iterative solvers, such as the fast multipole method \cite{rokhlin1985rapid} and its variants (e.g., \cite{schanz2018fast, YESYPENKO2025113707}),
rely mainly on the fast matrix-vector multiplication in the Krylov subspace method \cite{sogabe2022krylov}.
The $\mathcal{H}$-matrix method \cite{borm2003introduction} has made it possible to apply fast direct solvers, whose computational cost is almost independent of the condition number of the coefficient matrix of a discrete system.
Note that while the matrix-vector multiplication is a level 2 BLAS operation,
techniques employed in the fast direct solver such as LU factorization or some other factorization scheme
are categorized as level 3 BLAS.
In recent years, accelerators such as graphics processing units have shown excellent performance for algorithms with high computational intensity, such as those with level 3 BLAS operations.
This is particularly true for matrix-matrix multiplication, which is often used in matrix factorization techniques.
Furthermore, dedicated circuits for the operations of matrix-matrix multiplication are increasingly being implemented, making direct solvers attractive.

This study proposes direct solvers for Helmholtz transmission problems based on the direct-indirect mixed Burton--Miller boundary integral equation.
The proposed boundary integral equation is a rearrangement of a previously proposed equation \cite{YasuhiroMATSUMOTO202308-231124} that uses both direct and indirect integral representations.
With this rearrangement, the proposed equation has a sparse structure of the boundary integral operators, which enables the construction of efficient LU factorization for the discretized equation.
This efficient direct solver technique can be extended to fast direct solvers that use standard LU factorization in their algorithm.
Using the proposed efficient LU factorization, we formulate a variant of the Martinsson--Rokhlin fast direct solver \cite{MARTINSSON20051}, which is a recursive compression technique for a discretized boundary integral equation that uses shared coefficients based on a weak admissibility condition.
If a size $N$ of a coefficient matrix of the discrete system is small,
the proposed (non-fast) direct solver, which has $O(N^3)$ time complexity, may outperform the proposed fast direct solver, which has $O(N)$ time complexity, since the constant coefficient of the time complexity of the latter is large.
The two proposed direct solvers can handle problems ranging from small to large scales and are expected to be more efficient than the corresponding standard direct solvers for the ordinary Burton--Miller boundary integral equation for Helmholtz transmission problems.
The performance of the proposed direct and fast direct solvers is demonstrated using several numerical examples.
Moreover, the applicability of these solvers to nonlinear eigenvalue problems in boundary value problems using the Sakurai--Sugiura projection method is demonstrated using a high-order Nystr\"om method.
In addition to the development of the direct solvers, we establish the well-posedness of the direct-indirect mixed Burton--Miller boundary integral equation in the case of $C^2$ boundaries, which extends a existing result on
the injectivity of its integral operator for the case where the boundary shape is a circle \cite{matsumoto2025injectivity}.
A formulation that uses both direct and indirect integral representations is also found in \cite{rapun2008mixed}; however, the boundary integral equation in the previous study, but not the proposed equation, has (fictitious) eigenvalues on the real axis.

The rest of this paper is structured as follows.
Section \ref{sec:Preliminaries} formulates the Helmholtz transmission problem and reviews the mapping property of the corresponding boundary integral operators.
Section \ref{sec:well-posed} proposes the direct-indirect mixed Burton--Miller boundary integral equation and proves the well-posedness of this equation in the framework of H\"older space.
Section \ref{sec:solvers} presents the details of the proposed direct solvers.
Section \ref{sec:numerical_examples} demonstrates the performance of the proposed solvers.
Finally, Section \ref{sec:conclusion} gives the conclusions.

\section{Preliminaries} \label{sec:Preliminaries}
\subsection{The Helmholtz transmission problem}
Let $\Omega_1$ be an open and bounded subset of $\mathbb R^d$ for $d = 2, 3$.
We assume that the exterior $\Omega_0 := \mathbb R^d \setminus {\overline \Omega_1}$ is connected.
Let $\Gamma:=\partial \Omega_1$ be the interface between the two regions.
We assume that $\Gamma$ is of class $C^2$.
Let $\varepsilon_i > 0$, $\omega > 0$, and $k_i := \omega \sqrt{\varepsilon_i} > 0$ be constants,
representing the material constant, angular frequency, and wavenumber, respectively, in each $\Omega_{i}$ ($i=0,1$).
Let $u^{\mathrm{in}}$ be the incident wave field
that is
a solution of the Helmholtz equation with wavenumber $k_0$
in $\mathbb{R}^d$.
Let us consider the following transmission problem:
find the scattered fields $u_0^{\mathrm{sc}} \in C^2(\Omega_0) \cap C^1({\overline \Omega_0})$ and $u_1^{\mathrm{sc}} \in C^2(\Omega_1) \cap C^1({\overline \Omega_1})$ such that
\begin{align}
  \begin{dcases}
    \varDelta u_0^{\mathrm{sc}} + k_0^2 u_0^{\mathrm{sc}} = 0 & \text{in }\Omega_0,
    \\
    \varDelta u_1^{\mathrm{sc}} + k_1^2 u_1^{\mathrm{sc}} = 0 & \text{in }\Omega_1,
    \\
    u_0^{\mathrm{sc}} + u^{\mathrm{in}} = u_1^{\mathrm{sc}} \quad (=: u),
    & \text{on } \Gamma,
    \\
    \frac{1}{\varepsilon_0} \nu \cdot \nabla \qty(u_0^{\mathrm{sc}} + u_0^{\mathrm{in}})
    = \frac{1}{\varepsilon_1} \nu \cdot \nabla u_1^{\mathrm{sc}} \quad (=: q)
    & \text{on } \Gamma,
    \\
    {|x|}^{\frac{d-1}{2}}\left( \frac{\partial u_0^{\mathrm{sc}}}{\partial |x|} - ik u_0^{\mathrm{sc}} \right) \to 0 & \text{uniformly as } |x|\to\infty,
  \end{dcases}
  \label{eq:problem}
\end{align}
where $i$ is the imaginary unit, and the unit normal vector $\nu (x)$ on $\Gamma$ is directed into $\Omega_0$.
The setting of this problem is illustrated in Fig. \ref{fig:domain}.
\begin{figure}[tb]
  \centering
  \includegraphics[width=0.3\linewidth]{./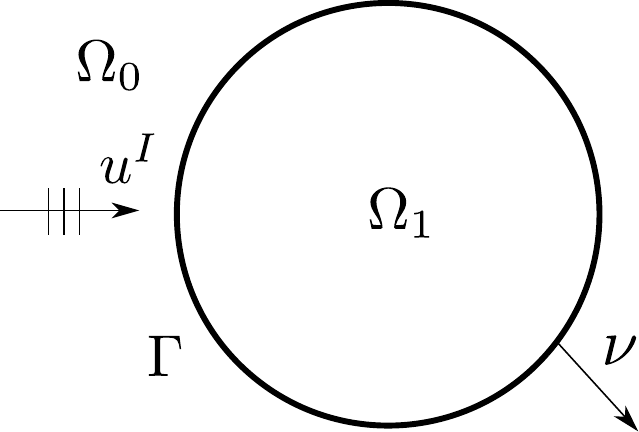}
  \caption{Diagram of the Helmholtz transmission problem}
  \label{fig:domain}
\end{figure}

\begin{remark}
  Under the assumption of this subsection,
  the transmission problem \eqref{eq:problem} is uniquely solvable \cite{kress1977transmission}.
  In fact, the definition of $u^{\mathrm{in}}$ is excessive to make Helmholtz transmission problems uniquely solvable.
  However, we limit ourselves to this case as we are interested in scattering problems.
\end{remark}

\subsection{Boundary integral operators} 
We define the following boundary integral operators:
\begin{align}
  (S_k\varphi)(x) &:= \int_{\Gamma} G(x,y;k)\varphi(y) \dd s(y),
  \\
  (D_k\varphi)(x) &:= \int_{\Gamma} \frac{\partial G}{\partial \nu(y)}(x,y;k)\varphi(y) \dd s(y),
  \\
  (D_k^* \varphi)(x) &:= \int_{\Gamma} \frac{\partial G}{\partial \nu(x)}(x,y;k)\varphi(y) \dd s(y),
  \\
  (N_k\varphi)(x) &:= \frac{\partial}{\partial\nu(x)}\int_{\Gamma} \frac{\partial G}{\partial \nu(y)}(x,y;k)\varphi(y) \dd s(y),
\end{align}
for all $x\in\Gamma$ and 
function $\varphi : \Gamma \to \mathbb{C}$, where
$\frac{\partial f}{\partial\nu(z)}$ is the normal derivative with respect to $\nu(z)$, 
defined as
\[
\frac{\partial f}{\partial\nu(z)} = \lim_{h\to 0_+} \nu(z)\cdot \nabla f(z + h \nu(z)), \quad z \in \Gamma.
\]
Here, $G(x, y; k)$ is the fundamental solution of the Helmholtz equation, given by
\begin{align}
  G(x, y; k) =
  \begin{dcases}
    \frac{i}{4} H_{0}^{(1)} (k |x-y|) & (d = 2), \\
    \frac{e^{i k |x - y|}}{4 \pi |x-y|} & (d = 3),
  \end{dcases}
\end{align}
for $x, y \in \mathbb{R}^d$ with $x \neq y$, where $H_{n}^{(1)}(z)$ is the $n$-th order Hankel function of the first kind for $n \in \mathbb{Z}$.

We first review the well-known mapping properties.
Although Lemma \ref{lem:N-N} is obvious,
it is proven here since the authors cannot find a direct description.
The embedding operator $\iota$ in Theorem \ref{prop:embedding}
has important roles in proving the well-posedness of a boundary integral operator in this paper.

\begin{thm}[Colton and Kress \cite{colton2013inverse}, Theorem 3.4, and Kress \cite{kress2014linear}, Theorems 7.5 and 7.6] \label{thm:mapping}
  Let $k>0$ and let $\Omega$ be an open and bounded subset of $\mathbb R^d$ with boundary $\Gamma$ of class $C^2$.
  Then,
  \begin{enumerate}
  \item $S_k:C^{0,\alpha}(\Gamma)\to C^{1,\alpha}(\Gamma)$ is bounded,
  \item $D_k:C^{1,\alpha}(\Gamma)\to C^{1,\alpha}(\Gamma)$ is compact,
  \item $D_k^*:C^{0,\alpha}(\Gamma)\to C^{0,\alpha}(\Gamma)$ is compact, and
  \item $N_k:C^{1,\alpha}(\Gamma)\to C^{0,\alpha}(\Gamma)$ is bounded.
  \end{enumerate}
\end{thm}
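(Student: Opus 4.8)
The plan is to reduce each of the four statements to two ingredients: the classical theory of weakly singular integral operators on a $C^2$ boundary, and the fact that the Helmholtz kernel differs from the corresponding Laplace kernel only by a strictly less singular, hence smoothing, remainder. Writing $G(x,y;k) = \Phi(x-y) + R_k(x,y)$, where $\Phi$ is the fundamental solution of the Laplace equation, one has in $d=3$ that $R_k(x,y) = (e^{ik|x-y|}-1)/(4\pi|x-y|)$ is bounded together with sufficiently many of its derivatives, and in $d=2$ the series expansion of $H_0^{(1)}$ shows that $R_k(x,y)$ equals an analytic function of $|x-y|^2$ plus $|x-y|^2\log|x-y|$ times an analytic function; in both cases $R_k$ is strictly less singular than $\Phi$. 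Consequently it suffices to prove (1)--(4) for the static operators $S_0, D_0, D_0^*, N_0$ and then to note that each Helmholtz remainder contributes only a smoothing (in particular bounded, indeed compact) term of the required mapping type.

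The first step is to treat $S_0$. Differentiating the single-layer potential tangentially and invoking the standard estimates for surface potentials with weakly singular kernels, one sees that the kernel of the tangential gradient of $S_0$ is $O(|x-y|^{1-d})$ as $y\to x$ on $\Gamma$, which is weakly singular with respect to the $(d-1)$-dimensional surface measure; operators with such kernels map $C^{0,\alpha}(\Gamma)$ boundedly into $C^{0,\alpha}(\Gamma)$, and together with the trivial bound on $S_0\varphi$ itself this yields $S_0\colon C^{0,\alpha}(\Gamma)\to C^{1,\alpha}(\Gamma)$. For $D_0$ and $D_0^*$ the key observation is that on a $C^2$ boundary the numerators $\nu(y)\cdot(x-y)$ and $\nu(x)\cdot(x-y)$ vanish like $|x-y|^2$ as $y\to x$ along $\Gamma$, so both kernels are $O(|x-y|^{2-d})$ --- one order less singular than the generic $|x-y|^{1-d}$ --- and integral operators with such kernels are compact on $C^{0,\alpha}(\Gamma)$, and, after tangential differentiation, on $C^{1,\alpha}(\Gamma)$. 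This gives statements (2) and (3), and again the Helmholtz remainders only improve the estimates.

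The genuinely hypersingular operator $N_k$ is treated via the integration-by-parts (Maue) identity. Integrating by parts along the $C^2$ curve (for $d=2$) or surface (for $d=3$) one obtains, in the two-dimensional case,
\begin{equation}
  (N_k\varphi)(x) = \frac{\dd}{\dd s(x)}\int_{\Gamma} G(x,y;k)\,\frac{\dd\varphi}{\dd s(y)}\,\dd s(y)
  + k^2\int_{\Gamma} \bigl(\nu(x)\cdot\nu(y)\bigr)G(x,y;k)\,\varphi(y)\,\dd s(y),
\end{equation}
with an analogous identity involving surface curls in three dimensions. The first term is the composition $\frac{\dd}{\dd s}\circ S_k\circ\frac{\dd}{\dd s}$, which is bounded because $\frac{\dd}{\dd s}$ maps $C^{1,\alpha}(\Gamma)$ into $C^{0,\alpha}(\Gamma)$, $S_k$ maps $C^{0,\alpha}(\Gamma)$ into $C^{1,\alpha}(\Gamma)$ by statement (1), and $\frac{\dd}{\dd s}$ maps $C^{1,\alpha}(\Gamma)$ into $C^{0,\alpha}(\Gamma)$ again; the second term has a weakly singular kernel and is bounded (indeed compact) from $C^{0,\alpha}(\Gamma)$ to $C^{0,\alpha}(\Gamma)$. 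Hence $N_k\colon C^{1,\alpha}(\Gamma)\to C^{0,\alpha}(\Gamma)$ is bounded, which is statement (4).

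The main obstacle will be the rigorous justification of the potential-theoretic estimates on a merely $C^2$ boundary --- in particular obtaining H\"older continuity, not just boundedness, of the resulting potentials, which requires the delicate but classical kernel estimates of potential theory --- together with establishing the Maue identity as a genuine identity, where the hypersingular integral must be interpreted as a Hadamard finite part or treated through a careful limiting argument. Since all of this is carried out in full in the cited references, in what follows we merely record the statement; the complete proofs may be found in \cite{colton2013inverse} (Theorem 3.4) and \cite{kress2014linear} (Theorems 7.5 and 7.6).
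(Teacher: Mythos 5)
The paper gives no proof of this theorem at all: it is stated as a classical result with citations to Colton--Kress and Kress, so your closing deferral to those same references places you on essentially the same footing as the paper, and your sketch (Laplace-plus-smooth-remainder splitting of the Helmholtz kernel, the $O(|x-y|^{2-d})$ estimate for the double-layer kernels on a $C^2$ boundary, the Maue identity reducing $N_k$ to $\frac{\dd}{\dd s}\circ S_k\circ\frac{\dd}{\dd s}$ plus a weakly singular term) is the standard textbook route. One caveat: the tangential derivative of the single-layer kernel is \emph{not} weakly singular --- it is a Cauchy-type kernel of the critical order $|x-y|^{-(d-1)}$ on the $(d-1)$-dimensional surface --- so the boundedness of $S_0:C^{0,\alpha}(\Gamma)\to C^{1,\alpha}(\Gamma)$ does not follow from the ``weakly singular kernels preserve $C^{0,\alpha}$'' principle you invoke, but from the cancellation-based H\"older estimates for surface potentials (the Colton--Kress Theorem 2.17 type argument), which you correctly identify as the delicate step and defer to the cited references; with that reading, your account is consistent with how the paper uses the result.
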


\begin{thm}[Colton and Kress \cite{colton1983integral}, Theorem 2.31, and Colton and Kress \cite{colton2013inverse}, \S 3.1] \label{prop:calderon}
  Let $k>0$ and let $\Omega$ be an open and bounded subset of $\mathbb R^d$ with boundary $\Gamma$ of class $C^2$.
  Then,
  \begin{enumerate}
  \item $N_k - N_0 : C^{0,\alpha}(\Gamma)\to C^{0,\alpha}(\Gamma)$ is compact,
  \item $S_k N_k = D_k D_k - \frac{1}{4} I^{1,\alpha}$,
  \item $N_kS_k = D^*_k D^*_k - \frac{1}{4} I^{0,\alpha}$,
  \item $D_k S_k = S_k D_k^*$, and
  \item $D_k^* N_k = N_k D_k$,
  \end{enumerate}
  where $I^{1,\alpha}$ and $I^{0,\alpha}$ are the identity operators
  on $C^{1, \alpha}(\Gamma)$ and $C^{0, \alpha}(\Gamma)$, respectively,
  and $N_0$ corresponds to the Laplace kernel version of $N_k$.
  Statements 2, 3, 4, and 5 are called Calder\'on relations.
\end{thm}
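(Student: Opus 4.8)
The plan is to split the statement into its two logically independent halves: the Calder\'on relations 2--5, which follow ``softly'' from Green's representation formula, and the compactness assertion 1, which requires a direct look at the hypersingular kernels; I expect only the latter to be genuinely technical. For statements 2--5 I would first introduce the single- and double-layer potentials
\[
  (\mathcal{S}_k\varphi)(x) = \int_\Gamma G(x,y;k)\,\varphi(y)\,\dd s(y),
  \qquad
  (\mathcal{D}_k\varphi)(x) = \int_\Gamma \frac{\partial G}{\partial\nu(y)}(x,y;k)\,\varphi(y)\,\dd s(y),
  \qquad x\in\Omega_1,
\]
and recall two classical facts: the jump relations expressing the interior Dirichlet and Neumann traces of $\mathcal{S}_k\varphi$ and $\mathcal{D}_k\varphi$ on $\Gamma$ through $S_k,D_k,D_k^*,N_k$, and Green's representation formula, by which every solution $u$ of $\varDelta u+k^2u=0$ in $\Omega_1$ satisfies $u=\mathcal{S}_k(\partial_\nu u|_\Gamma)-\mathcal{D}_k(u|_\Gamma)$ in $\Omega_1$. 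Taking the two interior traces of this identity and using Theorem~\ref{thm:mapping} to see that every operator occurring is bounded on $C^{1,\alpha}(\Gamma)\times C^{0,\alpha}(\Gamma)$, one obtains $(u|_\Gamma,\partial_\nu u|_\Gamma)^{\mathsf T}=\mathcal{C}(u|_\Gamma,\partial_\nu u|_\Gamma)^{\mathsf T}$, where $\mathcal{C}$ is the interior Calder\'on projector
\[
  \mathcal{C} := \begin{pmatrix} \tfrac12 I^{1,\alpha}-D_k & S_k \\ -N_k & \tfrac12 I^{0,\alpha}+D_k^* \end{pmatrix}
\]
(the exact signs and the placement of the $\tfrac12$'s follow from the jump relations under the convention that $\nu$ is directed into $\Omega_0$; this is routine bookkeeping).

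The key observation is that $\mathcal{C}$ is idempotent. Indeed, for arbitrary data $(f,g)\in C^{1,\alpha}(\Gamma)\times C^{0,\alpha}(\Gamma)$ the function $u:=\mathcal{S}_k g-\mathcal{D}_k f$ solves $\varDelta u+k^2u=0$ in $\Omega_1$ and, by the jump relations, has Cauchy data $\mathcal{C}(f,g)^{\mathsf T}$; applying the representation formula to this particular $u$ gives $\mathcal{C}(f,g)^{\mathsf T}=\mathcal{C}^2(f,g)^{\mathsf T}$, hence $\mathcal{C}^2=\mathcal{C}$. Expanding $\mathcal{C}^2=\mathcal{C}$ block by block then produces exactly the four identities: the $(1,1)$ block gives $S_kN_k=D_kD_k-\tfrac14 I^{1,\alpha}$ (statement 2), the $(2,2)$ block gives $N_kS_k=D_k^*D_k^*-\tfrac14 I^{0,\alpha}$ (statement 3), the $(1,2)$ block gives $D_kS_k=S_kD_k^*$ (statement 4), and the $(2,1)$ block gives $D_k^*N_k=N_kD_k$ (statement 5). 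Equivalently, one may bypass the projector language and simply compute the two interior traces of the representation formula applied to $u=\mathcal{D}_k\varphi$, which yields statements 2 and 5, and to $u=\mathcal{S}_k\psi$, which yields statements 3 and 4.

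For statement 1 the argument above is unavailable, since $N_k$ by itself does not act on $C^{0,\alpha}(\Gamma)$; one must instead compare the kernels of $N_k$ and $N_0$. Using a Maue-type (integration-by-parts) representation of the hypersingular operator, or the Hadamard finite-part interpretation of $\partial_{\nu(x)}\partial_{\nu(y)}G(x,y;k)$, I would write the kernel of $N_k-N_0$ near the diagonal as $\partial_{\nu(x)}\partial_{\nu(y)}\bigl(G(x,y;k)-G(x,y;0)\bigr)$. Since the two fundamental solutions share the same leading singularity, and since $G(x,y;k)-G(x,y;0)$ has, near $x=y$, an expansion (from the Taylor expansion of $e^{ik|x-y|}$ when $d=3$, from the small-argument expansion of $H_0^{(1)}$ when $d=2$) whose lowest-order terms are a constant plus a term of order $|x-y|$ when $d=3$, resp.\ of order $|x-y|^2\log|x-y|$ when $d=2$, all the non-integrable (orders $|x-y|^{-d}$ and $|x-y|^{-(d-1)}$) contributions cancel; combining this with the geometric estimates $(x-y)\cdot\nu(x),\,(x-y)\cdot\nu(y)=O(|x-y|^2)$ valid on a $C^2$ boundary, the kernel of $N_k-N_0$ turns out to be at most weakly singular, of order $|x-y|^{-(d-2)}$ (i.e.\ $\log|x-y|$ for $d=2$). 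Hence $N_k-N_0$ is a weakly singular integral operator on the $C^2$ manifold $\Gamma$, and therefore compact from $C^{0,\alpha}(\Gamma)$ to $C^{0,\alpha}(\Gamma)$ by the standard compactness theory for weakly singular operators (cf.\ \cite{kress2014linear}). The main obstacle is exactly this cancellation computation --- carrying out the two normal differentiations, tracking the geometric factors, and verifying that what remains is integrable uniformly in $x$ --- whereas the Calder\'on relations need no estimates once the jump relations and the representation formula are in hand.
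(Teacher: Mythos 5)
This theorem is quoted in the paper as a known result of Colton and Kress; the paper gives no proof of its own, so there is nothing internal to compare against --- your proposal is in effect a reconstruction of the classical arguments in the cited references, and for statements 2--5 it is correct and standard. The Calder\'on-projector route (interior traces of Green's representation, idempotency of
\(\mathcal{C}\), block-by-block expansion) is sound: with the paper's sign conventions ($\nu$ pointing into $\Omega_0$, interior limits on the ``$-$'' side) your projector and the block assignments check out, and the alternative you mention (taking the two interior traces of the representation formula applied to $\mathcal{D}_k\varphi$ and to $\mathcal{S}_k\psi$) is exactly how Colton--Kress derive relations 2, 5 and 3, 4, respectively. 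The only implicit ingredient is the up-to-the-boundary regularity of the potentials for densities in $C^{1,\alpha}(\Gamma)\times C^{0,\alpha}(\Gamma)$ on a $C^2$ boundary (so that Green's formula may be applied to $u=\mathcal{S}_kg-\mathcal{D}_kf$ and so that $N_k$ of the double-layer trace is meaningful); this is covered by the classical potential-theoretic regularity results and is worth citing explicitly rather than folding into Theorem~\ref{thm:mapping}, which only concerns the boundary operators.

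For statement 1, your cancellation computation is the right mechanism and gives the correct singularity order ($|x-y|^{-(d-2)}$, logarithmic when $d=2$), but the final step ``weakly singular, hence compact on $C^{0,\alpha}(\Gamma)$'' is stated too loosely. Weak singularity of the kernel by itself yields compactness on $C(\Gamma)$, not on H\"older spaces; to conclude compactness on $C^{0,\alpha}(\Gamma)$ one needs, in addition, H\"older-type estimates on the kernel difference (it is a ``standard'' kernel with smooth off-diagonal behaviour and estimable tangential differences) so that $N_k-N_0$ maps $C(\Gamma)$ boundedly into $C^{0,\alpha}(\Gamma)$; compactness on $C^{0,\alpha}(\Gamma)$ then follows by composing with the compact embedding of $C^{0,\alpha}(\Gamma)$ into $C(\Gamma)$. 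This is precisely the content of Theorem 2.31 in \cite{colton1983integral}, so the gap is one of precision rather than of substance, but as written the last inference does not follow from weak singularity alone.
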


\begin{thm}[Gilbarg and Trudinger \cite{gilbarg1998elliptic}, Lemma 6.36] \label{prop:embedding}
  Let $\Omega$ be an open and bounded subset of $\mathbb R^d$ with boundary $\Gamma$ of class $C^{1, \alpha}$ for $0 < \alpha < 1$.
  Then, the embedding operator $\iota: C^{1,\alpha}(\Gamma)\to C^{0,\alpha}(\Gamma)$ is compact.
\end{thm}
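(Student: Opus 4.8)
The plan is to obtain compactness of $\iota$ from the Arzel\`a--Ascoli theorem combined with an interpolation estimate for H\"older seminorms; equivalently, it suffices to show that every sequence bounded in $C^{1,\alpha}(\Gamma)$ has a subsequence converging in $C^{0,\alpha}(\Gamma)$. Since $\Gamma$ is compact and of class $C^{1,\alpha}$, I first fix a finite atlas of $C^{1,\alpha}$ charts and a partition of unity subordinate to it, so that every statement about $C^{k,\alpha}(\Gamma)$ is reduced to finitely many statements about H\"older spaces on bounded open subsets of $\mathbb R^{d-1}$ and the chartwise first derivatives of functions on $\Gamma$ make sense. This reduction also supplies a \emph{chord--arc} bound $d_\Gamma(x,y)\le C\,|x-y|$ for all $x,y\in\Gamma$, where $d_\Gamma$ is the geodesic distance: on short scales $\Gamma$ is a $C^1$ graph, so $d_\Gamma$ is comparable to the Euclidean distance, and on scales bounded away from zero one uses $d_\Gamma\le\operatorname{diam}_{\mathrm{geo}}(\Gamma)$ together with compactness.

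Now take $\{f_n\}\subset C^{1,\alpha}(\Gamma)$ with $\|f_n\|_{C^{1,\alpha}(\Gamma)}\le M$. In each chart, $\{f_n\}$ and its first derivatives are bounded in $C^{0,\alpha}$, hence uniformly bounded and equicontinuous (a uniform $\alpha$-H\"older bound yields the uniform modulus of continuity $C\,t^{\alpha}$). Applying the Arzel\`a--Ascoli theorem in each of the finitely many charts and successively passing to subsequences, I obtain a subsequence---still written $\{f_n\}$---and a function $f$ with $f_n\to f$ and $\nabla f_n\to\nabla f$ uniformly on $\Gamma$ (in local coordinates), where $f\in C^1(\Gamma)$ by the standard fact that the uniform limit of $C^1$ functions with uniformly convergent derivatives is $C^1$ with the limiting derivative. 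In particular $\|f_n-f\|_{C(\Gamma)}\to0$ and $\|\nabla(f_n-f)\|_{C(\Gamma)}\to0$.

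It remains to upgrade this $C^1$-convergence to convergence in $C^{0,\alpha}(\Gamma)$, which is where the hypothesis $\alpha<1$ enters. Write $g_n:=f_n-f$ and estimate $[g_n]_{C^{0,\alpha}(\Gamma)}$ by splitting pairs $x\ne y$ according to $|x-y|\le 1$ or $|x-y|>1$. If $|x-y|\le1$, the mean value theorem along a minimizing path and the chord--arc bound give $|g_n(x)-g_n(y)|\le\|\nabla g_n\|_{C(\Gamma)}\,d_\Gamma(x,y)\le C\|\nabla g_n\|_{C(\Gamma)}\,|x-y|$, so $|g_n(x)-g_n(y)|/|x-y|^{\alpha}\le C\|\nabla g_n\|_{C(\Gamma)}\,|x-y|^{1-\alpha}\le C\|\nabla g_n\|_{C(\Gamma)}$; if $|x-y|>1$, then $|g_n(x)-g_n(y)|/|x-y|^{\alpha}\le 2\|g_n\|_{C(\Gamma)}$. (Equivalently, one may invoke the interpolation inequality $[g]_{C^{0,\alpha}(\Gamma)}\le C\,\|g\|_{C(\Gamma)}^{1-\alpha}\,\|g\|_{C^1(\Gamma)}^{\alpha}$.) Taking the supremum over $x\ne y$ yields $[g_n]_{C^{0,\alpha}(\Gamma)}\le C\max\{\|\nabla g_n\|_{C(\Gamma)},\|g_n\|_{C(\Gamma)}\}\to0$, which together with $\|g_n\|_{C(\Gamma)}\to0$ gives $\|f_n-f\|_{C^{0,\alpha}(\Gamma)}\to0$ and hence the compactness of $\iota$. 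The same computation shows, alternatively, that $\iota$ factors through $C^{1,\alpha}(\Gamma)\hookrightarrow C^{1}(\Gamma)\hookrightarrow C^{0,\alpha}(\Gamma)$ with the first embedding compact and the second bounded, and the composition of a bounded operator with a compact one is compact.

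I do not anticipate any essential difficulty: the only points needing care are the passage between $\Gamma$ and its $C^{1,\alpha}$ charts, the chord--arc comparison of geodesic and Euclidean distance (which is precisely where the $C^1$-regularity of $\Gamma$ is used), and the identification of the Arzel\`a--Ascoli limit as a genuine $C^1$ function; the final H\"older estimate is elementary once $\alpha<1$.
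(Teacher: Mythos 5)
Your argument is correct, but note that the paper itself offers no proof of this statement: it is quoted verbatim as Lemma 6.36 of Gilbarg and Trudinger and used as a black box. What you have written is essentially the standard textbook proof of that lemma (and the one given in the cited source): reduce to finitely many $C^{1,\alpha}$ charts, apply Arzel\`a--Ascoli to the functions and their chartwise derivatives to extract a $C^1$-convergent subsequence, and then upgrade to $C^{0,\alpha}$-convergence by the splitting/interpolation estimate that uses $\alpha<1$, i.e.\ $[g]_{C^{0,\alpha}}\lesssim \|g\|_{C^0}^{1-\alpha}\|g\|_{C^1}^{\alpha}$; equivalently, the factorization $C^{1,\alpha}(\Gamma)\hookrightarrow C^{1}(\Gamma)\hookrightarrow C^{0,\alpha}(\Gamma)$ with the first embedding compact. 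One small point to tidy up: your chord--arc bound ``$d_\Gamma(x,y)\le C|x-y|$ for all $x,y\in\Gamma$'' with the fixed splitting threshold $|x-y|\le 1$ tacitly assumes that any two points at Euclidean distance at most $1$ can be joined inside $\Gamma$ (in particular it ignores that $\partial\Omega_1$ may be disconnected, e.g.\ an annulus, where $d_\Gamma$ is infinite between components); this is repaired by choosing the threshold to be a Lebesgue number of the chart cover, or by carrying out the mean-value estimate chart by chart, and does not affect the substance of the proof.
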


\begin{lemma} \label{lem:N-N}
  Let $k_1>0$, $k_2>0$, and $0 < \alpha < 1$.
  Let $\Omega$ be an open and bounded subset of $\mathbb R^d$ with boundary $\Gamma$ of class $C^2$.
  Then, $N_{k_1} - N_{k_2}$ is compact from $C^{1,\alpha}(\Gamma) $ into $C^{0,\alpha}(\Gamma)$.
\end{lemma}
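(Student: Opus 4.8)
The plan is to reduce everything to the Calder\'on-type relation of Theorem \ref{prop:calderon}(1), which already states that $N_k - N_0$ is compact \emph{on} $C^{0,\alpha}(\Gamma)$ for every $k>0$, combined with the elementary fact that $C^{1,\alpha}(\Gamma)$ embeds continuously into $C^{0,\alpha}(\Gamma)$ (indeed compactly, by Theorem \ref{prop:embedding}, although mere boundedness of the embedding $\iota$ is all that will be needed). Note that one cannot simply argue ``difference of bounded operators is bounded'': Theorem \ref{thm:mapping}(4) only gives boundedness of each $N_{k_j}$, not compactness, so the cancellation of the leading singularity must be used.

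First I would write the telescoping identity
\[
  N_{k_1} - N_{k_2} = (N_{k_1} - N_0) - (N_{k_2} - N_0),
\]
and observe that, regarded as operators from $C^{1,\alpha}(\Gamma)$ to $C^{0,\alpha}(\Gamma)$, each term on the right factors as $(N_{k_j} - N_0)\circ\iota$, where $\iota\colon C^{1,\alpha}(\Gamma)\to C^{0,\alpha}(\Gamma)$ is the embedding operator and $N_{k_j} - N_0\colon C^{0,\alpha}(\Gamma)\to C^{0,\alpha}(\Gamma)$ is the compact operator supplied by Theorem \ref{prop:calderon}(1). The one point that keeps this from being a one-liner is to check that the integral operator $N_{k_j} - N_0$ acting on a function in the subspace $C^{1,\alpha}(\Gamma)$ really coincides with the $C^{0,\alpha}$-version appearing in Theorem \ref{prop:calderon}(1); this is immediate once one recalls that the strongly singular parts of the kernels of $N_{k_j}$ and $N_0$ are $k$-independent and hence cancel, so the difference is represented by the same (at most weakly singular) kernel on both function spaces.

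Then I would invoke the standard operator-ideal facts: the composition of a bounded operator with a compact operator is compact, so $(N_{k_j} - N_0)\circ\iota\colon C^{1,\alpha}(\Gamma)\to C^{0,\alpha}(\Gamma)$ is compact for $j=1,2$; and the difference of two compact operators is compact. Hence $N_{k_1} - N_{k_2}\colon C^{1,\alpha}(\Gamma)\to C^{0,\alpha}(\Gamma)$ is compact, which is the assertion.

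I do not anticipate a genuine obstacle here: since $\Gamma$ is of class $C^2$, all hypotheses of Theorems \ref{prop:calderon} and \ref{prop:embedding} are met, and the argument is essentially bookkeeping. The only spot demanding a sentence of care is the compatibility of the two interpretations of $N_{k_j} - N_0$ noted above; alternatively, one could bypass even that by arguing directly that the kernel of $N_{k_1} - N_{k_2}$ is weakly singular (the $k$-independent leading singularity cancels) and that weakly singular integral operators are compact from $C^{1,\alpha}(\Gamma)$ into $C^{0,\alpha}(\Gamma)$.
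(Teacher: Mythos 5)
Your proof is correct and follows essentially the same route as the paper: the telescoping identity $N_{k_1}-N_{k_2}=(N_{k_1}-N_0)-(N_{k_2}-N_0)$ combined with Theorem \ref{prop:calderon}(1), then composition with the embedding $\iota$ of Theorem \ref{prop:embedding} to pass from $C^{0,\alpha}(\Gamma)$ to $C^{1,\alpha}(\Gamma)$ as domain. The only (immaterial) difference is that the paper invokes the compactness of $\iota$ whereas you observe boundedness already suffices since the other factor is compact.
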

\begin{proof}
  From the relation
  $N_{k_1} - N_{k_2} = (N_{k_1} - N_{0}) - (N_{k_2} - N_{0})$,
  we can see that $N_{k_1} - N_{k_2}$ is compact
  from $C^{0,\alpha}(\Gamma)$ into $C^{0,\alpha}(\Gamma)$ by Theorem \ref{prop:calderon}.
  Considering $(N_{k_1} - N_{k_2}) (\iota \varphi$) for all $\varphi \in C^{1,\alpha}(\Gamma)$,
  we can also see that $N_{k_1} - N_{k_2}$ is a compact operator from $C^{1,\alpha}(\Gamma)$ into $C^{0,\alpha}(\Gamma)$
  since the embedding operator $\iota : C^{1,\alpha}(\Gamma)\to C^{0,\alpha}(\Gamma)$ is compact from Theorem \ref{prop:embedding}.
\end{proof}

\begin{thm}[Kress \cite{kress2014linear}, Theorem 3.4] \label{prop:riesz}
  Let $K : X \to X$ be a compact linear operator on a normed space $X$.
  Let $I$ be the identity operator on $X$.
  Then, $I-K$ is injective if and only if it is surjective.
\end{thm}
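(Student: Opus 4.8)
The plan is to reconstruct the classical Riesz theory for compact perturbations of the identity. Write $L := I - K$. I would prove the two implications separately; in each case the argument is by contradiction: one iterates $L$ to produce a strictly monotone chain of closed subspaces and then extracts from it a bounded sequence whose image under $K$ has no convergent subsequence, contradicting compactness of $K$.

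First I would collect the structural facts about $L$ and its powers. The kernel $N(L)$ is finite-dimensional: on $N(L)$ one has $Kx = x$, so the closed unit ball of $N(L)$ coincides with its own image under $K$ and is therefore relatively compact, forcing $N(L)$ to be finite-dimensional by Riesz's lemma. Consequently $N(L)$ admits a closed complement, $X = N(L)\oplus M$, and I would show that $L|_M$ is bounded below: otherwise there are $x_n\in M$ with $\|x_n\|=1$ and $Lx_n\to 0$, and passing to a subsequence along which $Kx_n\to y$ gives $x_n = Lx_n + Kx_n \to y$, so that $y\in M$, $\|y\|=1$, and $Ly=\lim Lx_n=0$, i.e. $y\in N(L)\cap M=\{0\}$, a contradiction. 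Hence $L|_M$ has closed range and $L(X)=L(M)$ is closed. The same applies verbatim to every power $L^n$, since $L^n = I - K_n$ with $K_n := \sum_{j=1}^{n}\binom{n}{j}(-1)^{j+1}K^j$ a finite sum of compositions involving $K$, hence compact; in particular every $N(L^n)$ is finite-dimensional and every $L^n(X)$ is closed.

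For ``injective $\Rightarrow$ surjective'', suppose $L$ is injective but $L(X)\neq X$. Injectivity then makes the chain $X\supsetneq L(X)\supsetneq L^2(X)\supsetneq\cdots$ strictly decreasing, and each term is closed. By Riesz's lemma pick $x_n\in L^n(X)$ with $\|x_n\|=1$ and $\operatorname{dist}\!\bigl(x_n,L^{n+1}(X)\bigr)\ge\tfrac12$. For $m<n$ I would write $Kx_m-Kx_n = x_m-\bigl(Lx_m+x_n-Lx_n\bigr)$ and observe that $Lx_m$, $x_n$, $Lx_n$ all lie in $L^{m+1}(X)$, so $\|Kx_m-Kx_n\|\ge\operatorname{dist}\!\bigl(x_m,L^{m+1}(X)\bigr)\ge\tfrac12$; thus $(Kx_n)$ is bounded but has no convergent subsequence, contradicting compactness of $K$. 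For the converse, suppose $L$ is surjective but not injective: choose $z_1\neq0$ with $Lz_1=0$ and, using surjectivity, recursively $z_n$ with $Lz_n=z_{n-1}$, so that $z_n\in N(L^n)\setminus N(L^{n-1})$ and the chain $\{0\}\subsetneq N(L)\subsetneq N(L^2)\subsetneq\cdots$ is strictly increasing with closed terms. Picking $y_n\in N(L^n)$ with $\|y_n\|=1$ and $\operatorname{dist}\!\bigl(y_n,N(L^{n-1})\bigr)\ge\tfrac12$, the same computation $Ky_n-Ky_m = y_n-\bigl(Ly_n+y_m-Ly_m\bigr)$ with $Ly_n,y_m,Ly_m\in N(L^{n-1})$ for $m<n$ yields $\|Ky_n-Ky_m\|\ge\tfrac12$, the same contradiction. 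Hence $L$ is injective, and the equivalence is proved.

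The step I expect to be the real obstacle is the closedness of the ranges $L^n(X)$: this is the only place where compactness of $K$ is used beyond finite-dimensionality of the kernels, and it hinges on the splitting $X=N(L^n)\oplus M_n$ into a finite-dimensional kernel and a closed complement, together with the ``bounded below'' argument above. Once that is in hand, the rest is bookkeeping with Riesz's lemma and the observation that each $L^n$ is again identity-plus-compact.
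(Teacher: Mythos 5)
The paper gives no proof of this statement---it is quoted verbatim from the cited reference (Kress, \emph{Linear Integral Equations}, Theorem 3.4)---and your reconstruction is essentially the classical Riesz-theory argument found there: finite-dimensional kernels, closed ranges, and the strictly monotone chains of $N(L^n)$ and $L^n(X)$ attacked with Riesz's lemma and the estimate $\|Kx_m-Kx_n\|\ge \tfrac12$; your bookkeeping in both chain arguments and the observation that $L^n=I-K_n$ with $K_n$ compact are all correct. The one step you should tighten is the inference from ``$L|_M$ is bounded below'' to ``$L(M)$ is closed'': the theorem is stated for a general normed space, and without completeness a bounded-below operator only makes preimage sequences Cauchy, not convergent. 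The repair uses compactness once more instead of completeness: if $Lx_n\to y$ with $x_n\in M$, boundedness below makes $(x_n)$ bounded, so $Kx_{n_k}\to z$ along a subsequence, hence $x_{n_k}=Lx_{n_k}+Kx_{n_k}\to y+z$ and $L(y+z)=y$, so the range is closed (Kress's own proof achieves the same by subtracting best approximations from the finite-dimensional kernel rather than fixing a closed complement $M$). With that one-line patch---which is anyway unnecessary for the Banach space $X=C^{1,\alpha}(\Gamma)\times C^{0,\alpha}(\Gamma)\times C^{0,\alpha}(\Gamma)$ actually used in the paper---your proof is complete and matches the cited source's approach.
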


\section{Well-posedness of direct-indirect mixed Burton--Miller boundary integral equation} \label{sec:well-posed}

\subsection{Boundary integral equations}
We formulate the proposed boundary integral equation in the H\"older space setting.
We define $X$ as $C^{1, \alpha}(\Gamma) \times C^{0, \alpha}(\Gamma) \times C^{0, \alpha}(\Gamma)$.
Note that $X$ is a Banach space.
We wish to find a density function $\varphi$ in $C^{0, \alpha}(\Gamma)$ that satisfies
\begin{align}
  u_{1}^{\mathrm{sc}}(x) = \int_{\Gamma} G(x, y; k_1) \varphi(y) \dd s(y), \quad x \in \Omega_{1}.
  \label{eq:indirect_u}
\end{align}
We note that $u_{1}^{\mathrm{sc}} \in C^{1, \alpha}(\overline \Omega_1)$ \cite[Theorems 2.12 and 2.17]{colton1983integral}.
In \eqref{eq:indirect_u}, we consider the limit of a sequence $(x_n) \subset \Omega_1$ with $x_n \to y$ on $\Gamma$
to have the following relations:
\begin{align}
  u &= S_{k_1} \varphi \quad \text{on } \Gamma, \\
  q &= \frac{1}{\varepsilon_1} \qty(D_{k_1}^* + \frac{1}{2}) \varphi \quad \text{on } \Gamma,
\end{align}
where $u$ and $q$ have already been defined as the transmission condition in \eqref{eq:problem}.
Let $\beta$ be a constant in $\mathbb{C}$, called the Burton--Miller constant, such that $\Im(\beta) \neq 0$.
Then, Green's identity gives the following equation:
\begin{align}
  \qty(D_{k_0} -1/2 + \beta N_{k_0})u -\varepsilon_0 \left\{ S_{k_0} + \beta \left(D^*_{k_0}+\frac{1}{2}\right)\right\} q = - u^{\mathrm{in}} - \beta \pdv{u^{\mathrm{in}}}{\nu} \quad \text{on } \Gamma.
\end{align}
Considering these couplings, the direct-indirect mixed Burton--Miller boundary integral equation
is given by
\begin{equation}
  Ax=f,
  \label{eq:mixed_BM_equation}
\end{equation}
where $x$ and $f$ are functions on $X$ given by
\[
x := \mqty[
  u \\
  q \\
  \varphi
],
\]
\[
f := \mqty[
  0 \\
  0 \\
  -\iota u^{\mathrm{in}} - \beta \pdv{u^{\mathrm{in}}}{\nu}
],
\]
respectively, and
$A : X \to X$ is the bounded linear operator, defined as
\begin{multline}
  A:= \\
  \begin{bmatrix}
    I^{1, \alpha} & 0 & -S_{k_1}
    \\
    0 & I^{0, \alpha} & -\frac{1}{\varepsilon_1}\left(D^*_{k_1} + \frac{1}{2}I^{0, \alpha} \right)
    \\
    \iota D_{k_0} - \iota \frac{1}{2}I^{1, \alpha} + \beta N_{k_0} & -\varepsilon_0 \left\{ \iota S_{k_0} + \beta \left(D^*_{k_0}+\frac{1}{2}I^{0, \alpha}\right)\right\} & 0
  \end{bmatrix}
  .
  \label{eq:mixed_bm_op}
\end{multline}
Note that this boundary integral equation is obtained by rearranging a previously proposed equation \cite{YasuhiroMATSUMOTO202308-231124}.

\subsection{Well-posedness}
Following the standard argument, we show that the linear operator $A$ is injective and surjective to prove that it is invertible on $X=C^{1,\alpha}(\Gamma)\times C^{0,\alpha}(\Gamma)\times C^{0,\alpha}(\Gamma)$ (i.e., the boundary integral equation is well-posed).

\begin{lemma} \label{thm:injective}
  Let $A : X \to X$ be the integral operator defined by \eqref{eq:mixed_bm_op}.
  Let $\omega$, $\varepsilon_0$, and $\varepsilon_1$ be real positive constants.
  Then, $A$ is injective.
\end{lemma}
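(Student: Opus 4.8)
The plan is to prove that $Ax=0$ implies $x=(u,q,\varphi)=0$. Writing \eqref{eq:mixed_bm_op} out row by row, the equation $Ax=0$ is equivalent to
\[
u=S_{k_1}\varphi,\qquad \varepsilon_1 q=(D^*_{k_1}+\tfrac12)\varphi\quad\text{on }\Gamma,
\]
together with the homogeneous direct-indirect Burton--Miller relation
\[
\Big[(D_{k_0}-\tfrac12)u-\varepsilon_0 S_{k_0}q\Big]+\beta\Big[N_{k_0}u-\varepsilon_0(D^*_{k_0}+\tfrac12)q\Big]=0\quad\text{on }\Gamma .
\]
The first two identities will be read as trace conditions for an interior field, the third one as a trace condition for an exterior field, and the two fields will then be glued into a solution of the homogeneous transmission problem, which must vanish.

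First I would set $v_1(x):=\int_\Gamma G(x,y;k_1)\varphi(y)\dd s(y)$ for $x\in\Omega_1$. By the jump relations for the single-layer potential, $v_1$ solves $\varDelta v_1+k_1^2 v_1=0$ in $\Omega_1$ with Dirichlet trace $S_{k_1}\varphi=u$ and interior Neumann trace $(D^*_{k_1}+\tfrac12)\varphi=\varepsilon_1 q$ on $\Gamma$; by Theorem \ref{thm:mapping} and the classical regularity of layer potentials with H\"older continuous densities, $v_1\in C^2(\Omega_1)\cap C^1(\overline\Omega_1)$.

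The decisive step is to recover a matching radiating field from the third relation. Let
\[
w(x):=\int_\Gamma \frac{\partial G}{\partial\nu(y)}(x,y;k_0)\,u(y)\dd s(y)-\varepsilon_0\int_\Gamma G(x,y;k_0)\,q(y)\dd s(y),\qquad x\in\mathbb R^d\setminus\Gamma,
\]
which solves $\varDelta w+k_0^2 w=0$ in $\mathbb R^d\setminus\Gamma$ and satisfies the radiation condition. Computing the one-sided limits of $w$ and of its normal derivative by the jump relations, the two bracketed expressions above are precisely the Dirichlet and Neumann traces of $w$ from the $\Omega_1$ side, so the third relation says exactly that $w$ solves the interior impedance problem
\[
\varDelta w+k_0^2 w=0\ \text{in }\Omega_1,\qquad w+\beta\,\frac{\partial w}{\partial\nu}=0\ \text{on }\Gamma .
\]
Green's first identity on $\Omega_1$ gives $\int_{\Omega_1}(|\nabla w|^2-k_0^2|w|^2)\dd x=-\beta^{-1}\int_\Gamma|w|^2\dd s$; the left-hand side is real, so, since $\Im\beta\neq0$, taking imaginary parts forces $\int_\Gamma|w|^2\dd s=0$, whence $w=0$ and then $\partial w/\partial\nu=0$ on $\Gamma$. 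By Holmgren's uniqueness theorem, $w\equiv0$ in $\Omega_1$, so the traces of $w$ from the $\Omega_0$ side equal the jumps of $w$ across $\Gamma$, i.e.\ $u$ and $\varepsilon_0 q$. Hence $v_0:=w|_{\Omega_0}$ is a radiating solution of $\varDelta v_0+k_0^2 v_0=0$ in $\Omega_0$ with $v_0=u$ and $\frac{\partial v_0}{\partial\nu}=\varepsilon_0 q$ on $\Gamma$, and $v_0\in C^2(\Omega_0)\cap C^1(\overline\Omega_0)$.

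The pair $(v_0,v_1)$ therefore solves the homogeneous transmission problem \eqref{eq:problem} (with $u^{\mathrm{in}}\equiv0$): the two Helmholtz equations hold, $v_0=u=v_1$ and $\tfrac1{\varepsilon_0}\nu\cdot\nabla v_0=q=\tfrac1{\varepsilon_1}\nu\cdot\nabla v_1$ on $\Gamma$, and $v_0$ is radiating. By the unique solvability of the Helmholtz transmission problem \cite{kress1977transmission}, $v_0\equiv0$ and $v_1\equiv0$, hence $u=0$ and $q=0$. Substituting this back into the first two relations gives $S_{k_1}\varphi=0$ and $(D^*_{k_1}+\tfrac12)\varphi=0$, so the single-layer potential $v_1$ has vanishing interior Cauchy data, hence $v_1\equiv0$ in $\Omega_1$ by Holmgren's theorem, and also vanishing exterior Dirichlet trace $S_{k_1}\varphi=0$, hence $v_1\equiv0$ in $\Omega_0$ by uniqueness for the exterior Dirichlet problem; the jump relation for the normal derivative of the single-layer potential then yields $\varphi=0$. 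This proves that $A$ is injective. I expect the third paragraph to be the main obstacle: one must recognize that the third row of \eqref{eq:mixed_bm_op} encodes an interior impedance boundary condition \emph{for wavenumber $k_0$} whose only solution is trivial exactly because $\Im\beta\neq0$ — this is where the Burton--Miller constant does its work, and it is what makes the exterior field available so that transmission-problem uniqueness can close the argument.
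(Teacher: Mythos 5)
Your proof is correct, and while its core is the same as the paper's — the decisive observations that the third row of \eqref{eq:mixed_bm_op} says the combined $k_0$-potential $D_{k_0}[u]-\varepsilon_0 S_{k_0}[q]$ satisfies a homogeneous interior impedance condition (trivial only because $\Im\beta\neq 0$), and that uniqueness for the homogeneous transmission problem then kills the direct unknowns — you close the argument by a genuinely different and somewhat leaner route. The paper introduces a second combined potential with wavenumber $k_1$ in the densities $(u,q)$, uses the Calder\'on relation $D_{k_1}S_{k_1}=S_{k_1}D^{*}_{k_1}$ together with rows one and two to show its exterior trace vanishes, feeds both potentials into the transmission uniqueness to get $u=q=0$, and finally disposes of the indirect density by invoking injectivity of the combined operator $S_{k_1}+\beta\varepsilon_1^{-1}(D^{*}_{k_1}+\tfrac12)$ (the Burton--Miller equation for the Dirichlet problem). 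You instead take the single-layer field $S_{k_1}\varphi$ itself as the interior member of the transmission pair, since rows one and two say its interior Cauchy data are exactly $(u,\varepsilon_1 q)$; this removes the Calder\'on computation entirely. For the final step $\varphi=0$ you use $S_{k_1}\varphi=0$ and $(D^{*}_{k_1}+\tfrac12)\varphi=0$ separately: zero interior Cauchy data plus exterior Dirichlet uniqueness (valid since $k_1>0$) and the normal-derivative jump relation, which correctly avoids the interior Dirichlet eigenvalue pitfall that would arise if one used $S_{k_1}\varphi=0$ alone, and replaces the paper's citation of Burton--Miller injectivity by elementary potential theory. You also prove the interior impedance uniqueness via Green's identity rather than citing it, with the correct sign bookkeeping for the normal pointing into $\Omega_0$. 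The only points to make explicit in a polished write-up are the regularity statements you gesture at (the layer potentials with H\"older densities on a $C^2$ boundary are $C^{1,\alpha}$ up to $\Gamma$, so Green's identity and the classical transmission uniqueness theorem apply), but these are the same facts the paper relies on, so there is no gap.
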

\begin{proof}
  Suppose that $\phi \in C^{1,\alpha}(\Gamma)$, $\psi \in C^{0,\alpha}(\Gamma)$, and $\eta \in C^{0,\alpha}(\Gamma)$ satisfy
  \begin{align}
    A
    \mqty[
      \phi \\
      \psi \\
      \eta
    ]
    =
    \mqty[
      0 \\
      0 \\
      0
    ].
    \label{eq:injec}
  \end{align}
  It suffices to show that the above equation has only the trivial solution.
  We can define two potentials $v$ and $w$ using $\phi$ and $\psi$, expressed as
  \begin{align}
    v(x) &:= \int_{\Gamma} \pdv{G}{\nu(y)}\qty(x, y; k_0)\phi(y) \dd{s(y)}
    - \varepsilon_{0} \int_{\Gamma} G(x, y; k_0) \psi(y) \dd{s(y)},
    \quad x \in \mathbb{R}^2 \setminus \Gamma, \\
    w(x) &:= -\int_{\Gamma} \pdv{G}{\nu(y)}\qty(x, y; k_1)\phi(y) \dd{s(y)}
    + \varepsilon_{1} \int_{\Gamma} G(x, y; k_1) \psi(y) \dd{s(y)},
    \quad x \in \mathbb{R}^2 \setminus \Gamma.
  \end{align}
  Let $g{|}_{\pm}$ be the limits of a function $g$, defined as
  \begin{align}
    g{|}_{+} := \lim_{h \to 0_{+}} g(x + h\nu(x)), \quad x \in \Gamma, \\
    g{|}_{-} := \lim_{h \to 0_{+}} g(x - h\nu(x)), \quad x \in \Gamma.
  \end{align}
  We have the following jump relations of the potential $v$ and its normal derivative:
  \begin{align}
    v|_{\pm} &= \qty(D_{k_0} \pm \tfrac{1}{2}) \phi(x) - \varepsilon_{0} S_{k_0} \psi(x), \quad x \in \Gamma, \label{eq:potv_trace} \\
    \eval{\pdv{v}{\nu}}_{\pm} &= N_{k_0} \phi(x) - \varepsilon_{0} \qty(D^{*}_{k_0} \mp \tfrac{1}{2}) \psi(x), \quad x \in \Gamma. \label{eq:potv_dev_trace}
  \end{align}
  From \eqref{eq:injec}, the linear combination satisfies $\iota \qty(v|_{-}) + \beta \eval{\pdv{v}{\nu}}_{-} = 0$.
  Then, $v = 0$ in $\Omega_1$ since the interior problem of the Helmholtz equation with the homogeneous impedance boundary condition
  \begin{align}
    \begin{dcases}
      \varDelta v + k_{0}^{2} v = 0, \quad x \in \Omega_1 \\
      v|_{-} + \beta \eval{\pdv{v}{\nu}}_{-} = 0, \quad x \in \Gamma,
    \end{dcases}
  \end{align}
  has only the trivial solution when $k_0 > 0$ and $\Im (\beta) \neq 0$.
  Therefore, we have
  \begin{gather}
    v = 0 \quad \text{ in } \Omega, \\
    v|_{-} = \eval{\pdv{v}{\nu}}_{-} = 0. \label{eq:zero_v_trace}
  \end{gather}
  Similar to the case of $v$, we have the following jump relations:
  \begin{align}
    w|_{\pm} &= -\qty(D_{k_1} \pm \tfrac{1}{2}) \phi(x) + \varepsilon_{1} S_{k_1} \psi(x), \quad x \in \Gamma, \label{eq:potw_trace} \\
    \eval{\pdv{w}{\nu}}_{\pm} &= -N_{k_1} \phi(x) + \varepsilon_{1} \qty(D^{*}_{k_1} \mp \tfrac{1}{2}) \psi(x), \quad x \in \Gamma. \label{eq:potw_dev_trace}
  \end{align}
  We extract from \eqref{eq:injec} the relations
  \begin{align}
    \phi &= S_{k_1} \eta, \quad x \in \Gamma, \label{eq:indirect_rep} \\
    \psi &= \tfrac{1}{\varepsilon_1}\qty(D^{*}_{k_1} + \tfrac{1}{2}) \eta, \quad x \in \Gamma. \label{eq:indirect_def_rep}
  \end{align}
  Substituting these relations into $w|_+$ and utilizing the Calder\'on relation $S_{k_1} D_{k_1}^{*} - D_{k_1} S_{k_1} = 0$ from Theorem \ref{prop:calderon},
  we can see that $w|_{+} = 0$, as follows:
  \begin{align}
    w|_{+} &= -\qty(D_{k_1} + \tfrac{1}{2}) \phi(x) + \varepsilon_{1} S_{k_1} \psi(x) \\
    &= -\qty(D_{k_1} + \tfrac{1}{2}) S_{k_1} \eta
    + \varepsilon_{1} S_{k_1} \tfrac{1}{\varepsilon_1}\qty(D^{*}_{k_1} + \tfrac{1}{2}) \eta \\
    &= \qty(-D_{k_1}S_{k_1} + S_{k_1} D_{k_1}^{*})\eta \\
    &= 0.
  \end{align}
  Then, $w = 0$ in $\Omega_0$ since the exterior problem of the Helmholtz equation with the homogeneous Dirichlet condition
  \begin{align}
    \begin{dcases}
      \varDelta w + k_{1}^{2} w = 0, \quad x \in \Omega_0, \\
      w|_{+} = 0, \quad x \in \Gamma, \\
      w(x) \text{ satisfies the radiation condition when } |x| \to \infty.
    \end{dcases}
  \end{align}
  has only the trivial solution when $k_1 > 0$.
  Therefore, the following conditions hold:
  \begin{gather}
    w = 0 \quad \text{in } \Omega_0, \\
    w|_{+} = \eval{\pdv{w}{\nu}}_{+} = 0. \label{eq:zero_w_trace}
  \end{gather}
  Furthermore, $v = 0$ in $\Omega_0$ and $w = 0$ in $\Omega_1$ are established via
  the unique solvability of the transmission problems.
  We can see from \eqref{eq:potv_trace} and \eqref{eq:potv_dev_trace} that
  \begin{align}
    v|_{+} -  v|_{-} &= \phi, \\
    \eval{\pdv{v}{\nu}}_{+} - \eval{\pdv{v}{\nu}}_{-} &= \varepsilon_{0} \psi.
  \end{align}
  Similarly, we can see from \eqref{eq:potw_trace} and \eqref{eq:potw_dev_trace} that
  \begin{align}
    w|_{+} -  w|_{-} &= \phi, \\
    \eval{\pdv{w}{\nu}}_{+} - \eval{\pdv{w}{\nu}}_{-} &= -\varepsilon_{1} \psi.
  \end{align}
  Applying \eqref{eq:zero_v_trace} and \eqref{eq:zero_w_trace} to the above relations,
  we obtain the transmission conditions.
  In summary, we have the homogeneous transmission problem
  \begin{align}
    \begin{dcases}
      \varDelta v + k_{0}^{2}v = 0, \quad x \in \Omega_0, \\
      \varDelta w + k_{1}^{2}w = 0, \quad x \in \Omega_1, \\
      v|_{+} = w|_{-}, \\
      \frac{1}{\varepsilon_0} \eval{\pdv{v}{\nu}}_{+} = \frac{1}{\varepsilon_1} \eval{\pdv{w}{\nu}}_{-}, \\
      v(x) \text{ satisfies the radiation condition when } |x| \to \infty,
    \end{dcases}
  \end{align}
  which has only the trivial solution $v = 0$ in $\Omega_0$ and $w = 0$ in $\Omega_1$.
  Therefore, $\phi = \psi = 0$.

  Finally we show that $\eta = 0$ from $\phi = \psi = 0$.
  From \eqref{eq:indirect_rep}, \eqref{eq:indirect_def_rep}, and $\phi = \psi = 0$, we have the following:
  \begin{align}
    0 &= S_{1} \eta, \quad x \in \Gamma, \label{eq:homo_indiret_rep} \\
    0 &= \tfrac{1}{\varepsilon_1}\qty( D^{*}_{1} + \tfrac{1}{2}) \eta, \quad x \in \Gamma. \label{eq:homo_indirect_def_rep}
  \end{align}
  For $\beta \in \mathbb{C}$ and $\Im(\beta) \neq 0$,
  we obtain a linear combination of \eqref{eq:homo_indiret_rep} and \eqref{eq:homo_indirect_def_rep}
  multiplied by $\beta$, expressed as
  \begin{align}
    \qty(S_{1} + \beta \tfrac{1}{\varepsilon_1} \qty(D^{*}_{1} + \tfrac{1}{2})) \eta = 0. \label{eq:homo_bm_dirichlet}
  \end{align}
  Due to the unique solvability of the Burton--Miller boundary integral equation with the homogeneous Dirichlet condition \cite{burton1971application, colton1983integral},
  \eqref{eq:homo_bm_dirichlet} has only the trivial solution.
  Therefore, $\eta = 0$.
  Thus, we have shown that only the trivial solution $\phi = \psi = \eta = 0$ is allowed.
\end{proof}

\begin{lemma}\label{lemma:fredholm}
  Under the same assumption as that in Lemma \ref{thm:injective},
  the boundary integral operator $A:X \to X$, defined by \eqref{eq:mixed_bm_op}, is surjective.
\end{lemma}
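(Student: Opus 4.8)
The plan is to derive the surjectivity of $A$ from its injectivity (Lemma~\ref{thm:injective}) by the Riesz theory of Theorem~\ref{prop:riesz}, after collapsing the $3\times3$ block system to a scalar integral equation for the density $\varphi$. Write $A$ in the block form
\[
A=\begin{bmatrix}
I^{1,\alpha} & 0 & -S_{k_1}\\
0 & I^{0,\alpha} & -T\\
B_1 & B_2 & 0
\end{bmatrix},\qquad
T:=\tfrac{1}{\varepsilon_1}\qty(D^*_{k_1}+\tfrac12 I^{0,\alpha}),
\]
where $B_1:=\iota D_{k_0}-\tfrac12\iota I^{1,\alpha}+\beta N_{k_0}$ and $B_2:=-\varepsilon_0\{\iota S_{k_0}+\beta(D^*_{k_0}+\tfrac12 I^{0,\alpha})\}$ are bounded into $C^{0,\alpha}(\Gamma)$ by Theorem~\ref{thm:mapping}. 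Let $E:X\to X$ be the bounded, boundedly invertible operator that leaves the first two components unchanged and sends the third component $\varphi$ to $\varphi-B_1u-B_2q$; that is, $E$ is block lower triangular with diagonal blocks $I^{1,\alpha},I^{0,\alpha},I^{0,\alpha}$ and off-diagonal entries $-B_1,-B_2$ in its last block row. Then $EA$ is block upper triangular with diagonal blocks $I^{1,\alpha}$, $I^{0,\alpha}$ and the Schur complement $C:=B_1 S_{k_1}+B_2 T:C^{0,\alpha}(\Gamma)\to C^{0,\alpha}(\Gamma)$. Since $E$ is a linear homeomorphism and the two leading diagonal blocks of $EA$ are identities, $A$ is injective (resp.\ surjective) if and only if $C$ is; and given $f=(f_1,f_2,f_3)\in X$, any solution $\varphi$ of $C\varphi=f_3-B_1f_1-B_2f_2$ yields the preimage $(S_{k_1}\varphi+f_1,\;T\varphi+f_2,\;\varphi)$ of $f$ under $A$.

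The core of the proof is to show $C=c\,I^{0,\alpha}+K$ with $K$ compact and $c\ne0$. Expanding $B_1 S_{k_1}+B_2 T$ and using Theorem~\ref{thm:mapping} together with the compactness of the embedding $\iota$ from Theorem~\ref{prop:embedding}: every summand carrying a factor $\iota$ (namely $\iota D_{k_0}S_{k_1}$, $-\tfrac12\iota S_{k_1}$, and $-\tfrac{\varepsilon_0}{\varepsilon_1}\iota S_{k_0}(D^*_{k_1}+\tfrac12 I^{0,\alpha})$) is a bounded map into a $C^{1,\alpha}$ space followed by the compact embedding, hence compact on $C^{0,\alpha}(\Gamma)$; and since $D^*_{k_0}$ and $D^*_{k_1}$ are compact on $C^{0,\alpha}(\Gamma)$, any product of them is compact there as well. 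Consequently the only possibly non-compact terms are $\beta N_{k_0}S_{k_1}$ (from $B_1 S_{k_1}$) and $-\tfrac{\varepsilon_0\beta}{\varepsilon_1}(D^*_{k_0}+\tfrac12 I^{0,\alpha})(D^*_{k_1}+\tfrac12 I^{0,\alpha})$ (from $B_2 T$); expanding the latter, only the multiple $-\tfrac{\varepsilon_0\beta}{4\varepsilon_1}I^{0,\alpha}$ of the identity survives. The mixed-wavenumber operator $N_{k_0}S_{k_1}$ is the step I expect to be the main obstacle, since no Calder\'on identity of Theorem~\ref{prop:calderon} applies to it directly; I would treat it by the splitting
\[
N_{k_0}S_{k_1}=(N_{k_0}-N_{k_1})S_{k_1}+N_{k_1}S_{k_1},
\]
in which $(N_{k_0}-N_{k_1})S_{k_1}$ is compact on $C^{0,\alpha}(\Gamma)$ because $N_{k_0}-N_{k_1}:C^{1,\alpha}(\Gamma)\to C^{0,\alpha}(\Gamma)$ is compact (Lemma~\ref{lem:N-N}) and $S_{k_1}:C^{0,\alpha}(\Gamma)\to C^{1,\alpha}(\Gamma)$ is bounded, while $N_{k_1}S_{k_1}=(D^*_{k_1})^2-\tfrac14 I^{0,\alpha}$ by the Calder\'on relation in Theorem~\ref{prop:calderon}. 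Adding up the surviving identity multiples gives $c=-\tfrac{\beta}{4}-\tfrac{\varepsilon_0\beta}{4\varepsilon_1}=-\tfrac{\beta(\varepsilon_0+\varepsilon_1)}{4\varepsilon_1}$, which is nonzero because $\Im(\beta)\ne0$ and $\varepsilon_0,\varepsilon_1>0$.

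It then remains to conclude. Dividing by $c$, $C=c\big(I^{0,\alpha}-(-c^{-1}K)\big)$ with $-c^{-1}K$ compact, so by Theorem~\ref{prop:riesz} the surjectivity of $C$ follows once $C$ is injective. But if $C\varphi=0$, then $(S_{k_1}\varphi,\,T\varphi,\,\varphi)\in\ker A$, hence $\varphi=0$ by Lemma~\ref{thm:injective}; thus $C$ is injective, therefore surjective. By the block-triangular structure of $EA$, surjectivity of $C$ gives surjectivity of $EA$, and hence of $A$. Together with Lemma~\ref{thm:injective}, this shows $A$ is boundedly invertible on $X$, that is, the direct--indirect mixed Burton--Miller boundary integral equation is well-posed.
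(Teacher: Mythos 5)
Your proposal is correct and follows essentially the same route as the paper: reduce $A$ by a triangular factor to the Schur complement $C=B_1S_{k_1}+B_2T$ (the paper's $F=-U_1V_1-U_2V_2$), show via the Calder\'on relation $N_{k_1}S_{k_1}=(D^*_{k_1})^2-\tfrac14 I^{0,\alpha}$, Lemma~\ref{lem:N-N}, and the compact embedding $\iota$ that $C$ is a nonzero multiple $-\tfrac{\beta}{4}\bigl(1+\tfrac{\varepsilon_0}{\varepsilon_1}\bigr)$ of the identity plus a compact operator, then transfer injectivity from Lemma~\ref{thm:injective} and apply Theorem~\ref{prop:riesz}. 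The only cosmetic difference is that you use a one-sided lower-triangular factor $E$ with explicit back-substitution, whereas the paper conjugates with both $U$ and $V$ to obtain a block-diagonal $UAV$; the key operator and all estimates are identical.
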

\begin{proof}
  We define the bounded linear operators $U$ and $V$ on $X$ as
  \[
  U := 
  \begin{bmatrix}
    I^{1,\alpha} & & \\
    & I^{0,\alpha} & \\
    -U_{1} & -U_{2} & I^{0,\alpha}
  \end{bmatrix}
  ,
  \quad
  V :=
  \begin{bmatrix}
    I^{1,\alpha} & & -V_{1}
    \\
    & I^{0,\alpha} & -V_{2}
    \\
    & & I^{0,\alpha}
  \end{bmatrix}
  ,
  \]
  where $U_{1}$, $U_{2}$, $V_{1}$, and $V_{2}$ are defined as
  \begin{align}
    U_{1} &:= \iota D_{k_0}  - \frac{1}{2}\iota + \beta N_{k_0}, \\
    U_{2} &:= -\varepsilon_0 \qty{ \iota S_{k_0}+\beta\left(D^*_{k_0}+\frac{1}{2}I^{0,\alpha}\right) }, \\
    V_{1} &:= -S_{k_1}, \\
    V_{2} &:= -\frac{1}{\varepsilon_1}\left( D^*_{k_1} + \frac{1}{2}I^{0,\alpha} \right).
  \end{align}
  It is clear that $U$ and $V$ are invertible on $X$. A straightforward calculation shows that
  \[
  UAV =
  \begin{bmatrix}
    I^{1,\alpha} & & \\
    & I^{0,\alpha} & \\
    -U_{1} & -U_{2} & I^{0,\alpha}
  \end{bmatrix}
  \begin{bmatrix}
    I^{1,\alpha} & & V_{1}\\
    & I^{0,\alpha} & V_{2}\\
    U_{1} & U_{2} & 0
  \end{bmatrix}
  \begin{bmatrix}
    I^{1,\alpha} & & -V_{1}
    \\
    & I^{0,\alpha} & -V_{2}
    \\
    & & I^{0,\alpha}
  \end{bmatrix}
  =
  \begin{bmatrix}
    I^{1,\alpha} & &
    \\
    & I^{0,\alpha} &
    \\
    & & F
  \end{bmatrix}
  ,
  \]
  where $F:C^{0,\alpha}(\Gamma)\to C^{0,\alpha}(\Gamma)$ is the linear operator, expressed as
  \begin{align*}
    F &= -U_{1} V_{1} -U_{2} V_{2} \\
    &= -\frac{\beta }{4} \qty(1 + \frac{\varepsilon_0}{\varepsilon_1}) I^{0,\alpha} + K.
  \end{align*}
  Here, 
  \begin{multline}
    K= \iota D_{k_0} S_{k_1} -\frac{1}{2} \iota S_{k_1} + \beta (N_{k_0} - N_{k_1})S_1 + \beta D_{k_1}^* D_{k_1}^* \\
    -\frac{\varepsilon_0}{\varepsilon_1} \left( \iota S_{k_0}D_{k_1}^* + \frac{1}{2} \iota S_{k_0} + \beta D_{k_0}^* D_{k_1}^* + \frac{1}{2}\beta D_{k_0}^* + \frac{1}{2} \beta D_{k_1}^{*} \right).
  \end{multline}
  Since $K$ is compact, we observe that $F$ is the sum of the identity operator multiplied by some constant and the compact operator.
  Since $A$ is injective from Lemma \ref{thm:injective},
  $UAV$ is also injective.
  Therefore, $UAV$ is surjective from Theorem \ref{prop:riesz}.
  From the bijectivity of $U$ and $V$ on $X$,
  $U^{-1} UAV V^{-1} = A$ is surjective.
\end{proof}

\begin{thm}
  Under the same assumption as that in Lemma \ref{thm:injective},
  the boundary integral equation \eqref{eq:mixed_BM_equation} is well-posed. 
\end{thm}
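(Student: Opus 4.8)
The plan is to read the theorem off from the two preceding lemmas together with a standard functional-analytic fact. Lemma~\ref{thm:injective} shows that $A:X\to X$ is injective, and Lemma~\ref{lemma:fredholm} shows that it is surjective, so $A$ is a bounded linear bijection of the Banach space $X=C^{1,\alpha}(\Gamma)\times C^{0,\alpha}(\Gamma)\times C^{0,\alpha}(\Gamma)$ onto itself. By the bounded inverse theorem (equivalently, the open mapping theorem), $A^{-1}:X\to X$ is then automatically bounded. Hence for every $f\in X$ the equation $Ax=f$ has a unique solution $x=A^{-1}f\in X$ with $\|x\|_X\le\|A^{-1}\|\,\|f\|_X$, which is exactly the assertion that \eqref{eq:mixed_BM_equation} is well-posed: existence, uniqueness, and continuous dependence on the data.

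An alternative route, which I would actually spell out since it reuses the work already done and avoids appealing to the abstract open mapping theorem, is to return to the factorization $UAV$ established in the proof of Lemma~\ref{lemma:fredholm}: $UAV$ is block-diagonal with entries $I^{1,\alpha}$, $I^{0,\alpha}$ and $F$, where $F=-\tfrac{\beta}{4}\bigl(1+\tfrac{\varepsilon_0}{\varepsilon_1}\bigr)I^{0,\alpha}+K$ with $K$ compact on $C^{0,\alpha}(\Gamma)$. The scalar $c:=-\tfrac{\beta}{4}\bigl(1+\tfrac{\varepsilon_0}{\varepsilon_1}\bigr)$ is nonzero, because $\Im(\beta)\neq0$ forces $\beta\neq0$ while $1+\varepsilon_0/\varepsilon_1>0$; thus $c^{-1}F=I^{0,\alpha}+c^{-1}K$ is an injective compact perturbation of the identity, hence (by Theorem~\ref{prop:riesz}, also bijective, and by the Riesz theory for compact operators) has a bounded inverse. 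Consequently $F$, and therefore $UAV$, is boundedly invertible, so $A^{-1}=V\,(UAV)^{-1}\,U$ is bounded on $X$, giving the same conclusion.

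I do not anticipate any genuine obstacle at this final step: the substantive content --- injectivity via the uniqueness theorems for the interior impedance, exterior Dirichlet and transmission problems, and the Fredholm structure exposed by the operator factorization --- has already been handled in Lemmas~\ref{thm:injective} and~\ref{lemma:fredholm}. The only things to check here are that $X$ is complete (it is, being a finite Cartesian product of the Banach spaces $C^{1,\alpha}(\Gamma)$ and $C^{0,\alpha}(\Gamma)$) and that $A$ is bounded (noted when $A$ was defined), after which the theorem follows immediately.
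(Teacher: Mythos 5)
Your main argument is exactly the paper's proof: combine the injectivity and surjectivity lemmas to get bijectivity of $A$ on the Banach space $X$, then invoke the bounded inverse (open mapping) theorem to conclude that $A^{-1}$ is bounded, which is well-posedness. The alternative route through the factorization $UAV$ is also sound, but it is an optional refinement rather than a different proof, so your proposal matches the paper's approach.
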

\begin{proof}
  From Lemmas \ref{thm:injective} and \ref{lemma:fredholm},
  $A$ is bijective.
  Moreover, $A^{-1}$ is bounded since $A$ is a bounded linear operator on the Banach space $X$.
  The well-posedness of the direct-indirect mixed Burton--Miller equation \eqref{eq:mixed_BM_equation} is proven.
\end{proof}

\section{Proposed direct solvers} \label{sec:solvers}
We now propose efficient direct solvers using the direct-indirect mixed Burton--Miller equation.
They are efficient compared to LU factorization for the ordinary Burton--Miller equation, which uses only direct unknowns $u$ and $q$.
We first introduce the efficient LU factorization and then apply it to the fast direct solver, which is a variant \cite{MATSUMOTO2025106148} of the Martinsson--Rokhlin type solver \cite{MARTINSSON20051}.

The ordinary Burton--Miller equation is
\begin{equation}
  \begin{bmatrix}
    \iota D_{k_0} - \iota \frac{1}{2}I^{1, \alpha} + \beta N_{k_0} & -\varepsilon_0 \left\{ \iota S_{k_0} + \beta \left(D^*_{k_0}+\frac{1}{2}I^{0, \alpha}\right)\right\} \\
    D_{k_1} + \frac{1}{2}I^{1, \alpha} & -S_{k_1}
  \end{bmatrix}
  \mqty[
    u \\
    q
  ] = 
  \mqty[
    -\iota u^{\mathrm{in}} - \beta \pdv{u^{\mathrm{in}}}{\nu}\\
    0
  ].
  \label{eq:ordi_BM_equation}
\end{equation}
Its integral operator is a bounded linear operator from $C^{1, \alpha}(\Gamma) \times C^{0, \alpha}(\Gamma)$ into $C^{0, \alpha}(\Gamma) \times C^{1, \alpha}(\Gamma)$.
For the discretization of the boundary integral equations \eqref{eq:mixed_BM_equation} and \eqref{eq:ordi_BM_equation},
we use the Nystr\"om method based on the zeta-corrected quadrature \cite{wu2023unified}.
The zeta-corrected quadrature is a local corrected equidistant trapezoidal quadrature that can handle singularities in boundary integral equations.
Since the variant of the Martinsson--Rokhlin type solver is based on the strategy of weak admissibility type low-rank approximation,
we set the number of local correction points to 1 for the zeta-corrected quadrature.
For a function $\phi$ on $\Gamma$, this zeta-corrected quadrature results in at least $O(h)$ convergence for $N_k \phi$ and at least $O(h^2)$ convergence for other layer potentials $S_k \phi$, $D_k \phi$, and $D_k^* \phi$ in two dimensions \cite{wu2023unified}.
Here, $h$ is the equispaced interval of integral points in a parameter space $[0, 2\pi]$.
In what follows, we only discuss the two-dimensional case and $I^{0, \alpha}$ and $I^{1, \alpha}$ are simply denoted as $I$.

The discretized boundary integral equations are obtained using the
Nystr\"om method based on the zeta-corrected quadrature as follows.
Let $N$ be the number of quadrature points
whose coordinates are equally spaced in parameterized $\Gamma$.
By the zeta-corrected quadrature \cite{wu2023unified}, $S_k \phi$, $D_k \phi$, $D_k^* \phi$, $N_k \phi$, and $I \phi$
are discretized
to multiplications of matrices
$\bm{S}_k$, $\bm{D}_k$,
$\bm{D^*}_k$, $\bm{N}_k \in \mathbb{C}^{N \times N}$,
and $\bm{I}$ of size $N \times N$
and a vector $\bm{\phi} \in \mathbb{C}^N$ for a function $\phi$ on $\Gamma$.
Let $\bm{u}$, $\bm{q}$, and $\bm{\varphi} \in \mathbb{C}^N$ be discretized versions of $u$, $q$, and $\varphi$, respectively.
Then, the discretized direct-indirect Burton--Miller equation is written as
\begin{multline}
  \begin{bmatrix}
    \bm{I} & 0 & -\bm{S}_{k_1}
    \\
    0 & \bm{I} & -\frac{1}{\varepsilon_1}\left(\bm{D^*}_{k_1} + \frac{1}{2}\bm{I} \right)
    \\
    \bm{D}_{k_0} - \frac{1}{2}\bm{I} + \beta \bm{N}_{k_0} & -\varepsilon_0 \left\{ \bm{S}_{k_0} + \beta \left( \bm{D^*}_{k_0}+\frac{1}{2} \bm{I}\right)\right\} & 0
  \end{bmatrix}
  \mqty[
    \bm{u} \\
    \bm{q} \\
    \bm{\varphi}
  ]
  \\
  =
  \mqty[
    0 \\
    0 \\
    -\bm{u^{\mathrm{in}}} - \beta \bm{q^{\mathrm{in}}}
  ],
  \label{eq:dis_mixedBM}
\end{multline}
and the discretized ordinary Burton--Miller equation is expressed as
\begin{equation}
  \begin{bmatrix}
    \bm{D}_{k_0} - \frac{1}{2}\bm{I} + \beta \bm{N}_{k_0} & -\varepsilon_0 \left\{ \bm{S}_{k_0} + \beta \left(\bm{D^*}_{k_0}+\frac{1}{2}\bm{I} \right)\right\} \\
    \bm{D}_{k_1} + \frac{1}{2}\bm{I} & -\bm{S}_{k_1}
  \end{bmatrix}
  \mqty[
    \bm{u} \\
    \bm{q}
  ] = 
  \mqty[
    -\bm{u^{\mathrm{in}}} - \beta \bm{q^{\mathrm{in}}} \\
    0
  ],
  \label{eq:dis_BM}
\end{equation}
where $\bm{u}^{\mathrm{in}}$ and $\bm{q}^{\mathrm{in}}$ are vectors in $\mathbb{C}^N$ corresponding to the discretized $u^{\mathrm{in}}$ and $\pdv{u^{\mathrm{in}}}{\nu}$, respectively.

\subsection{Efficient LU factorization} \label{sec:LU}
The discretized direct-indirect Burton--Miller equation \eqref{eq:dis_mixedBM} has $3N$ degrees of freedom,
which is more than the $2N$ degrees of freedom of the discretized ordinary Burton--Miller equation \eqref{eq:dis_BM}.
However, we can make the direct solvers for \eqref{eq:dis_mixedBM} be faster than those for \eqref{eq:dis_BM} since for the former equation, the diagonal blocks are an identity or zero matrix and the placement of the integral operators is sparse.
First, we propose an efficient direct solver for \eqref{eq:dis_mixedBM} based on LU-type factorization.
Note that the standard LU factorization of \eqref{eq:dis_BM} requires approximately $\frac{2}{3}(2N)^3 = \frac{16}{3} N^3$ floating-point operations.
\begin{thm} \label{thm:LU}
  Let the discretized direct-indirect Burton--Miller equation \eqref{eq:dis_mixedBM} be denoted as
  \begin{equation}
    \begin{bmatrix}
      \bm{I} & 0 & M_{13}
      \\
      0 & \bm{I} & M_{23}
      \\
      M_{31} & M_{32} & 0
    \end{bmatrix}
    \begin{bmatrix}
      \bm{u} \\
      \bm{q} \\
      \bm{\varphi}
    \end{bmatrix}
    =
    \begin{bmatrix}
      0 \\
      0 \\
      b_3
    \end{bmatrix}
    ,
    \label{eq:given_system}
  \end{equation}
  where each matrix or vector is given by
  \begin{align}
    &M_{13} = -\bm{S}_{k_1}, \quad
    M_{23} = -\frac{1}{\varepsilon_1}\left(\bm{D^*}_{k_1} + \frac{1}{2}\bm{I} \right), \\
    &M_{31} = \bm{D}_{k_0} - \frac{1}{2}\bm{I} + \beta \bm{N}_{k_0}, \quad
    M_{32} = -\varepsilon_0 \left\{ \bm{S}_{k_0} + \beta \left( \bm{D^*}_{k_0}+\frac{1}{2} \bm{I}\right)\right\}, \\
    &b_3 = -\bm{u^{\mathrm{in}}} - \beta \bm{q^{\mathrm{in}}}.
  \end{align}
  Assume that the coefficient matrix of \eqref{eq:given_system} is invertible.
  Then, the solution to these linear equations is given by
  \[
  \mqty[
    \bm{u} \\
    \bm{q} \\
    \bm{\varphi}
  ]
  =
  \mqty[
    -M_{13} x_3 \\
    -M_{23} x_3 \\
    x_3
  ],
  \]
  where $x_3 = (-M_{31} M_{13} - M_{32} M_{23})^{-1} b_3$.
  This solution procedure requires approximately $\frac{14}{3} N^3$ floating-point operations.
\end{thm}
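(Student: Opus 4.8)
The plan is to treat \eqref{eq:given_system} by block Gaussian elimination, taking advantage of the fact that the two leading diagonal blocks are already the identity, and then to tally the floating-point operations stage by stage. First I would read off from the first two block rows that $\bm u = -M_{13}\bm\varphi$ and $\bm q = -M_{23}\bm\varphi$; substituting these into the third block row $M_{31}\bm u + M_{32}\bm q = b_3$ yields $(-M_{31}M_{13} - M_{32}M_{23})\bm\varphi = b_3$. This is precisely the Schur complement of the coefficient matrix of \eqref{eq:given_system} with respect to its leading $2N\times 2N$ identity block; since that block is trivially invertible, the full coefficient matrix is invertible if and only if $-M_{31}M_{13}-M_{32}M_{23}$ is. Hence the assumed invertibility of the system legitimizes setting $x_3 = (-M_{31}M_{13}-M_{32}M_{23})^{-1}b_3$ and recovering $\bm u = -M_{13}x_3$, $\bm q = -M_{23}x_3$ as claimed.

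The solution procedure therefore decomposes into: (i) forming the two matrix products $M_{31}M_{13}$ and $M_{32}M_{23}$; (ii) assembling the $N\times N$ matrix $-M_{31}M_{13}-M_{32}M_{23}$ by a signed addition; (iii) computing an LU factorization of that matrix; (iv) solving for $x_3$ by forward and back substitution; and (v) forming the two matrix--vector products $-M_{13}x_3$ and $-M_{23}x_3$. The counting step is then routine: under the usual leading-order flop count, each of the two $N\times N$ by $N\times N$ products in (i) costs $\approx 2N^3$, the LU factorization in (iii) costs $\approx \tfrac{2}{3}N^3$, while (ii), (iv) and (v) are each $O(N^2)$ and thus negligible. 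Summing the leading terms gives $2N^3 + 2N^3 + \tfrac{2}{3}N^3 = \tfrac{14}{3}N^3$, which is the stated operation count, and is to be compared with the $\tfrac{2}{3}(2N)^3 = \tfrac{16}{3}N^3$ flops of a plain LU factorization of the ordinary equation \eqref{eq:dis_BM}.

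There is essentially no analytical obstacle here; the statement is elementary and the argument is just block elimination plus arithmetic. The only points requiring care are bookkeeping consistency --- the flop tally must be carried out with the same convention (leading-order term, complex arithmetic counted as for the baseline) used to obtain $\tfrac{16}{3}N^3$, so that the comparison is meaningful --- and the observation that $b_3$ is a single right-hand side, so the dominant cost genuinely resides in the two dense matrix--matrix products and the single LU factorization rather than in any triangular solve. It is also worth noting in passing that, when solving for many incident fields, stages (i)--(iii) are performed once and only the $O(N^2)$ stages (iv)--(v) are repeated; this is the structural saving that motivates the formulation.
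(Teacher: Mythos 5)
Your proposal is correct and follows essentially the same route as the paper: block Gaussian elimination exploiting the identity diagonal blocks, reduction to the Schur complement $-M_{31}M_{13}-M_{32}M_{23}$, and the identical flop tally $2N^3+2N^3+\tfrac{2}{3}N^3=\tfrac{14}{3}N^3$ (the paper merely phrases the elimination as an explicit block LU factorization with forward and backward substitution). Your justification that the Schur complement is invertible because the leading $2N\times 2N$ block is the identity is a slightly cleaner version of the paper's remark that this block arises in the Gaussian elimination of the invertible coefficient matrix.
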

\begin{proof}
  Since the coefficient matrix of the given system of linear equations is invertible from the assumption,
  the following LU factorization exists:
  \[
  \begin{bmatrix}
    \bm{I} & 0 & M_{13}
    \\
    0 & \bm{I} & {M_{23}}
    \\
      {M_{31}} & {M_{32}} & 0
  \end{bmatrix}
  =
  \begin{bmatrix}
    \bm{I} &  & 
    \\
      {L_{21}} & \bm{I} & 
      \\
        {L_{31}} & {L_{32}} & \bm{I}
  \end{bmatrix}
  \begin{bmatrix}
    {U_{11}} & {U_{12}} & {U_{13}}
    \\
    & {U_{22}} & {U_{23}}
    \\
    &  & {U_{33}}
  \end{bmatrix}
  \]
  This can be interpreted as an identity for the nine block matrices on the left-hand side.
  Immediately it follows that
  \begin{align}
    &U_{11} = \bm{I}, \quad U_{12} = 0, \quad U_{13} = M_{13}, \\
    &L_{21} = 0, \quad U_{22} = \bm{I}, \quad U_{23} = M_{23}, \\
    &L_{31} = M_{31}, \quad L_{32} = M_{32}, \quad U_{33} = -M_{31}M_{13} - M_{32}M_{23}.
  \end{align}
  From the above relations, we have the factorized form of \eqref{eq:given_system} expressed as
  \[
  \begin{bmatrix}
    \bm{I} &  & \\
    0 & \bm{I} & \\
    {M_{31}} & {M_{32}} & \bm{I}
  \end{bmatrix}
  \begin{bmatrix}
    \bm{I} & 0 & {M_{13}} \\
    & \bm{I} & {M_{23}} \\
    &  & {-M_{31}M_{13} - M_{32}M_{23}}
  \end{bmatrix}
  \begin{bmatrix}
    \bm{u} \\
    \bm{q} \\
    \bm{\varphi}
  \end{bmatrix}
  =
  \begin{bmatrix}
    0 \\
    0 \\
    b_3
  \end{bmatrix}
  .
  \]
  Let $L$, $U$, $x$, and $b$ be
  \[
  L =   \begin{bmatrix}
    \bm{I} &  & \\
    0 & \bm{I} & \\
    {M_{31}} & {M_{32}} & \bm{I}
  \end{bmatrix}
  , \quad
  U =
  \begin{bmatrix}
    \bm{I} & 0 & {M_{13}} \\
    & \bm{I} & {M_{23}} \\
    &  & {-M_{31}M_{13} - M_{32}M_{23}}
  \end{bmatrix}
  , \quad
  x = 
  \begin{bmatrix}
    \bm{u} \\
    \bm{q} \\
    \bm{\varphi}
  \end{bmatrix}
  , \quad
  b = 
  \begin{bmatrix}
    0 \\
    0 \\
    b_3
  \end{bmatrix}
  .
  \]
  Then, we can solve the factorized form of \eqref{eq:given_system}
  \[
  LUx=b
  \]
  using forward and backward substitution.
  Suppose that $y = Ux$ and
  \[
  y =
  \begin{bmatrix}
    y_1 \\
    y_2 \\
    y_3
  \end{bmatrix}
  ,
  \]
  where $y_1$, $y_2$, and $y_3 \in \mathbb{C}^N$.
  Considering the forward substitution for $Ly = b$, that is,
  \[
  \begin{bmatrix}
    \bm{I} &  & \\
    0 & \bm{I} & \\
    {M_{31}} & {M_{32}} & \bm{I}
  \end{bmatrix}
  \begin{bmatrix}
    y_1 \\
    y_2 \\
    y_3
  \end{bmatrix}
  =
  \begin{bmatrix}
    0 \\
    0 \\
    b_3
  \end{bmatrix}
  ,
  \]
  we have
  \begin{align}
    &y_1 = 0, \\
    &y_2 = 0, \\
    &y_3 = b_3
    .
  \end{align}
  Next, we consider the backward substitution for $Ux = y$, that is,
  \begin{equation}
    \begin{bmatrix}
      \bm{I} & 0 & {M_{13}} \\
      & \bm{I} & {M_{23}} \\
      &  & {-M_{31}M_{13} - M_{32}M_{23}}
    \end{bmatrix}
    \begin{bmatrix}
      \bm{u} \\
      \bm{q} \\
      \bm{\varphi}
    \end{bmatrix}
    =
    \begin{bmatrix}
      0 \\
      0 \\
      b_3
    \end{bmatrix}
    .
    \label{eq:bottom}
  \end{equation}
  We observe that $(-M_{31}M_{13} - M_{32}M_{23})$ is also invertible 
  since it can be obtained in the Gaussian elimination of the invertible coefficient of \eqref{eq:given_system}.
  Therefore, we have
  \[
  \bm{\varphi} = {(-M_{31}M_{13} - M_{32}M_{23})}^{-1} b_3
\]
  from the bottom relation in \eqref{eq:bottom}.
  Let $x_3 = {(-M_{31}M_{13} - M_{32}M_{23})}^{-1} b_3$.
  We obtain the solution of \eqref{eq:given_system} as
  \[
  \mqty[
    \bm{u} \\
    \bm{q} \\
    \bm{\varphi}
  ]
  =
  \mqty[
    -M_{13} x_3 \\
    -M_{23} x_3 \\
    x_3
  ].
  \]

  Finally, we estimate the leading order of the number of floating-point operations required.
  It is sufficient to consider the constant coefficient of $N^3$-order operations.
  Such operations appear only in the calculation $x_3 = {(-M_{31}M_{13} - M_{32}M_{23})}^{-1} b_3$.
  This calculation involves two multiplications of $N \times N$ matrices and one LU factorization of an $N \times N$ matrix.
  Therefore, we can see that
  the constant coefficient of the leading order of the number of floating-point operations is
  $2 \times (2N^3) + \frac{2}{3} N^3 = \frac{14}{3}N^3$.
\end{proof}

The approximate number of required floating-point operations of LU factorization for 
the discretized direct-indirect Burton--Miller equation \eqref{eq:dis_mixedBM} ($\frac{14}{3}N^3$)
is $12.5 \%$ smaller than that for the discretized ordinary Burton--Miller equation \eqref{eq:dis_BM} ($\frac{16}{3}N^3$).
Thus, the method in Theorem \ref{thm:LU} is expected to solve \eqref{eq:dis_mixedBM} more efficiently
than standard LU factorization of \eqref{eq:dis_BM}.

\subsubsection{Extension to more general right-hand sides}
We can extend Theorem \ref{thm:LU} to more cases.
First, we extend Theorem \ref{thm:LU} to the case where the right-hand side is an arbitrary vector in $\mathbb{C}^{3N}$.
This extension is used in the application to the Sakurai--Sugiura projection method to solve nonlinear eigenvalue problems in Section \ref{sec:ssm}.
The problem setting is given as follows:
\begin{equation}
  \begin{bmatrix}
    \bm{I} & 0 & M_{13}
    \\
    0 & \bm{I} & M_{23}
    \\
    M_{31} & M_{32} & 0
  \end{bmatrix}
  \begin{bmatrix}
    x_{1} \\
    x_{2} \\
    x_{3}
  \end{bmatrix}
  =
  \begin{bmatrix}
    b_{1} \\
    b_{2} \\
    b_{3}
  \end{bmatrix}
  ,
  \label{eq:for_ssm_RHS}
\end{equation}
where $M_{13}$, $M_{23}$, $M_{31}$, and $M_{32}$ were given in Theorem \ref{thm:LU},
$b_{i}$ is an arbitrary vector in $\mathbb{C}^N$ for $i = 1, 2, 3$,
and $x_{i}$ is the solution of the above linear equation.
\begin{cor} \label{lem:for_ssm_RHS}
  Assume that the coefficient matrix of \eqref{eq:for_ssm_RHS} is invertible.
  Then, the solution $x_{i}$ in \eqref{eq:for_ssm_RHS} is given by
  \[
  \begin{bmatrix}
    x_{1} \\
    x_{2} \\
    x_{3}
  \end{bmatrix}
  =
  \mqty[
    b_1 - M_{13}z_3 \\
    b_2 - M_{23}z_3 \\
    z_3
  ],
  \]
  where $z_3$ is expressed as 
  \[
  z_3 = (-M_{31} M_{13} - M_{32} M_{23})^{-1} (b_3 - M_{31}b_1 - M_{32}b_2).
  \]
\end{cor}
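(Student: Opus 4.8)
The plan is to recycle the block LU factorization from the proof of Theorem~\ref{thm:LU} unchanged --- it depends only on the coefficient matrix, not on the right-hand side --- and then redo the forward and backward substitutions with the general data $(b_1,b_2,b_3)$ in place of $(0,0,b_3)$.

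First I would recall that, under the invertibility hypothesis, the coefficient matrix of \eqref{eq:for_ssm_RHS} factors as $LU$ with
\[
L=\begin{bmatrix}\bm{I}& & \\ 0&\bm{I}& \\ M_{31}&M_{32}&\bm{I}\end{bmatrix},\qquad
U=\begin{bmatrix}\bm{I}&0&M_{13}\\ &\bm{I}&M_{23}\\ & &-M_{31}M_{13}-M_{32}M_{23}\end{bmatrix},
\]
and that the Schur complement $-M_{31}M_{13}-M_{32}M_{23}$ is invertible; both claims were established in the proof of Theorem~\ref{thm:LU} and neither uses the specific form of the right-hand side.

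Next I would carry out the forward substitution for $Ly=b$, reading the block rows from top to bottom, which yields $y_1=b_1$, $y_2=b_2$, and $y_3=b_3-M_{31}b_1-M_{32}b_2$. Then the backward substitution for $Ux=y$, read from bottom to top, gives $x_3=(-M_{31}M_{13}-M_{32}M_{23})^{-1}y_3=z_3$, then $x_2=y_2-M_{23}x_3=b_2-M_{23}z_3$, and finally $x_1=y_1-M_{13}x_3=b_1-M_{13}z_3$, which is exactly the asserted formula.

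There is essentially no obstacle here: the only point requiring any attention is that, unlike in Theorem~\ref{thm:LU} where $b_1=b_2=0$ collapses the forward sweep to $y_3=b_3$, the nonzero first two components now propagate into the combination $b_3-M_{31}b_1-M_{32}b_2$ that drives $z_3$. Everything else is identical bookkeeping, and one may optionally remark that the leading-order operation count $\tfrac{14}{3}N^3$ of Theorem~\ref{thm:LU} is unchanged, since the additional work consists only of $O(N^2)$ matrix-vector products.
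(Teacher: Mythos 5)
Your proposal is correct and follows essentially the same route as the paper, which proves this corollary simply by noting it follows from the same straightforward calculation (block LU factorization plus forward and backward substitution) used in Theorem~\ref{thm:LU}. Your explicit forward sweep $y=(b_1,\,b_2,\,b_3-M_{31}b_1-M_{32}b_2)$ and backward sweep reproducing the stated formulas is exactly the intended argument.
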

We next extend Theorem \ref{thm:LU}
to the case where the right-hand side is a block diagonal matrix toward the application to the fast direct solver.
Using the same notation for $M_{13}$, $M_{23}$, $M_{31}$, and $M_{32}$ as that in Theorem \ref{thm:LU},
the extended equation is given by
\begin{equation}
  \begin{bmatrix}
    \bm{I} & 0 & M_{13}
    \\
    0 & \bm{I} & M_{23}
    \\
    M_{31} & M_{32} & 0
  \end{bmatrix}
  \begin{bmatrix}
    X_{11} & X_{12} & X_{13} \\
    X_{21} & X_{22} & X_{23} \\
    X_{31} & X_{32} & X_{33}
  \end{bmatrix}
  =
  \begin{bmatrix}
    B_1 & &\\
    & B_2 &\\
    & & B_3
  \end{bmatrix}
  ,
  \label{eq:given_system_blockRHS}
\end{equation}
where $B_{1}$, $B_{2}$, and $B_{3}$ are matrices in $\mathbb{C}^{N \times c_1}$, $\mathbb{C}^{N \times c_2}$, and $\mathbb{C}^{N \times c_3}$ for some integers $c_1, c_2, c_3 > 0$, respectively.
Here,
$X_{11}$, $X_{21}$, and $X_{31}$ are matrices in $\mathbb{C}^{N \times c_1}$,
$X_{12}$, $X_{22}$, and $X_{32}$ are matrices in $\mathbb{C}^{N \times c_2}$,
and
$X_{13}$, $X_{23}$, and $X_{33}$ are matrices in $\mathbb{C}^{N \times c_3}$.

\begin{cor} \label{lem:block_LU}
Assume that the coefficient matrix of \eqref{eq:given_system_blockRHS} is invertible.
  Then, the solution $X_{ij}$ in \eqref{eq:given_system_blockRHS} is given by
  \[
  \begin{bmatrix}
    X_{11} & X_{12} & X_{13} \\
    X_{21} & X_{22} & X_{23} \\
    X_{31} & X_{32} & X_{33}
  \end{bmatrix}
  =
  \mqty[
    B_1 - M_{13}Z_{31} & -M_{13}Z_{32} & -M_{13}Z_{33} \\
    - M_{23}Z_{31} & B_2 -M_{23}Z_{32} & -M_{23}Z_{33} \\
    Z_{31} & Z_{32} & Z_{33} \\
  ],
  \]
  where $Z_{31}$, $Z_{32}$, and $Z_{33}$ are expressed as 
  \begin{align}
    &Z_{31} = -(-M_{31} M_{13} - M_{32} M_{23})^{-1} M_{31} {B_1}, \\
    &Z_{32} = -(-M_{31} M_{13} - M_{32} M_{23})^{-1} M_{32} {B_2}, \\
    &Z_{33} = (-M_{31} M_{13} - M_{32} M_{23})^{-1} {B_3}. \\
  \end{align}
\end{cor}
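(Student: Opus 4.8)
The plan is to observe that Corollary \ref{lem:block_LU} is simply the block-matrix restatement of the solution procedure already derived, so the proof only needs to unwind the forward/backward substitution of Theorem \ref{thm:LU} with the block-diagonal right-hand side $\diag(B_1,B_2,B_3)$ in place of a single vector. First I would recall from the proof of Theorem \ref{thm:LU} that invertibility of the $3N\times 3N$ coefficient matrix forces the $(3,3)$ pivot block $U_{33}=-M_{31}M_{13}-M_{32}M_{23}$ to be invertible, so $Z_{31}$, $Z_{32}$, $Z_{33}$ are well defined; and that the same coefficient matrix factors as $A=LU$ with
\[
L=\begin{bmatrix}\bm I & & \\ 0 & \bm I & \\ M_{31} & M_{32} & \bm I\end{bmatrix},\qquad
U=\begin{bmatrix}\bm I & 0 & M_{13}\\ & \bm I & M_{23}\\ & & U_{33}\end{bmatrix}.
\]
Since $L$ and $U$ do not depend on the right-hand side, one may solve \eqref{eq:given_system_blockRHS} by the same two-step substitution, now applied to matrices with $c_1+c_2+c_3$ columns.

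Next I would carry out the block forward substitution $LY=\diag(B_1,B_2,B_3)$: the first two block rows of $L$ give $Y$'s first and second block rows $[\,B_1\ 0\ 0\,]$ and $[\,0\ B_2\ 0\,]$, and the third row of $L$ then yields $Y$'s third block row $[\,-M_{31}B_1\ \ -M_{32}B_2\ \ B_3\,]$. Back substitution in $UX=Y$ gives, from the bottom block equation $U_{33}\cdot(\text{bottom block row of }X)=(\text{bottom block row of }Y)$, precisely $[\,Z_{31}\ Z_{32}\ Z_{33}\,]$; feeding this into the second and first block rows of $UX=Y$ produces the claimed top block rows $[\,B_1-M_{13}Z_{31},\ -M_{13}Z_{32},\ -M_{13}Z_{33}\,]$ and $[\,-M_{23}Z_{31},\ B_2-M_{23}Z_{32},\ -M_{23}Z_{33}\,]$, and reassembling the three block rows gives the asserted matrix. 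Equivalently, one may apply Corollary \ref{lem:for_ssm_RHS} separately to each of the three block columns, whose right-hand sides $(b_1,b_2,b_3)$ are $(B_1,0,0)$, $(0,B_2,0)$, and $(0,0,B_3)$; since the formula there is linear in the right-hand side it applies column by column to matrix data, and its output $z_3$ then specializes to $Z_{31}$, $Z_{32}$, $Z_{33}$, respectively.

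There is essentially no obstacle here: this is a bookkeeping corollary of the LU structure already established. The only point deserving an explicit remark is that the substitution steps of Theorem \ref{thm:LU} (equivalently, Corollary \ref{lem:for_ssm_RHS}) remain valid when the right-hand side is a matrix rather than a vector, because every operation involved is linear in, and acts column-wise on, the right-hand side. Once that is noted, the identification of the $Z_{3j}$ and of the top blocks is immediate, and the corollary follows.
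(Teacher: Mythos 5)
Your proposal is correct and follows essentially the same route as the paper, which proves Corollary \ref{lem:block_LU} by exactly the "straightforward calculation in the same way as done for Theorem \ref{thm:LU}" that you carry out: reuse the block LU factors, do forward/backward substitution column-wise on the block-diagonal right-hand side, and note that the pivot block $-M_{31}M_{13}-M_{32}M_{23}$ inherits invertibility from the full coefficient matrix. No gaps; your write-up is just a more explicit version of the paper's one-line argument.
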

\begin{proof}
  The claims of Corollaries \ref{lem:for_ssm_RHS} and \ref{lem:block_LU} can be obtained by straightforward calculation in the same way as done for Theorem \ref{thm:LU}.
\end{proof}

\subsection{Application to fast direct solver} \label{sec:fds}
We now construct a variant of the Martinsson--Rokhlin fast direct solver \cite{MATSUMOTO2025106148} that uses the discretized direct-indirect mixed Burton--Miller equation \eqref{eq:dis_mixedBM}.
The speedup factors for this formulation exist in both the low-rank approximation method for the off-diagonal blocks of the linear equations and the compression technique for the linear equations; therefore, we discuss them sequentially.
The result of Section \ref{sec:LU} is used in the latter discussion.

\subsubsection{Low-rank approximation using proxy method}
For constructing a fast method to solve \eqref{eq:dis_mixedBM},
a low-rank approximation technique is effective.
We first describe the blocking of the system of linear equations \eqref{eq:dis_mixedBM} and
then discuss the low-rank approximation with the weak admissibility condition
for the off-diagonal blocks of this system using the proxy method.

The blocking method is essentially the same as that in \cite{MATSUMOTO2025106148},
differing only by the choice of Nystr\"om discretization versus Galerkin discretization;
nevertheless, it is briefly described here for ease of reference.
We apply binary tree decomposition to the row and column indices of the coefficient matrix.
Since the discretized direct-indirect Burton--Miller equation has $3 \times 3$ block structured coefficient,
we need three index sets for row blocks and three index sets for column blocks.
These binary trees include the indices corresponding to the coordinates
of the collocation points as the row-side indices, and the coordinates of the integration points as the column-side indices.
The term ``collocation points'' is not typically used in the context of the Nystr\"om method.
However, we use this to explain the proxy method.
Suppose that this tree structure is a perfect binary tree whose root is level 0.
Let the cells of the six trees be
$\{ J_{i, \mathrm{row}}^{1, L} \}_{i = 1}^{p}$,
$\{ J_{i, \mathrm{row}}^{2, L} \}_{i = 1}^{p}$,
$\{ J_{i, \mathrm{row}}^{3, L} \}_{i = 1}^{p}$,
$\{ J_{i, \mathrm{col}}^{1, L} \}_{i = 1}^{p}$,
$\{ J_{i, \mathrm{col}}^{2, L} \}_{i = 1}^{p}$, and 
$\{ J_{i, \mathrm{col}}^{3, L} \}_{i = 1}^{p}$,
respectively, where $p = 2^{L}$ is the number of cells at leaf level $L$ of the trees.
Assume that each cell has $n$ indices at leaf level $L$ for simplicity.
Here, these cells are sequences of indices
such that the $s$-th indices in the cells are expressed as
\begin{align}
  J_{i, \mathrm{row}}^{j, L}(s) &= s + \qty(\sum_{k = 1}^{i - 1} |J_{k, \mathrm{row}}^{j, L}|), \quad j = 1, 2, 3, \\
  J_{i, \mathrm{col}}^{j, L}(s) &= s + \qty(\sum_{k = 1}^{i - 1} |J_{k, \mathrm{col}}^{j, L}|), \quad j = 1, 2, 3,
\end{align}
for $i = 1, 2, \ldots, p$ and for $s = 1, 2, \ldots, n$, where $| Z |$ is the number of elements in the sequence $ Z $,
and the summation for $i = 1$ in the above relations is the empty sum.
The index sets $J_{i, \mathrm{row}}^{j, L}$ and $J_{i, \mathrm{col}}^{j, L}$ for $j = 1, 2, 3$ are identical at the leaf level.
In this tree structure, the index sets at the upper levels,
namely $\{ J_{i, \mathrm{row}}^{j, \ell} \}_{i = 1}^{2^{\ell}}$ and $\{ J_{i, \mathrm{col}}^{j, \ell} \}_{i = 1}^{2^{\ell}}$ for $\ell = L-1, L-2, \ldots, 0$ and $j = 1, 2, 3$,
are defined as subsequences of the immediately lower level, which consists of the skeletons selected by the proxy method.
We define the sets of the cell index sets as
\begin{align}
&J_{i, \mathrm{row}}^{\ell} := \{J_{i, \mathrm{row}}^{1, L}, J_{i, \mathrm{row}}^{2, L}, J_{i, \mathrm{row}}^{3, L} \}, \quad \ell = L, L-1, \ldots, 0, \\
&J_{i, \mathrm{col}}^{\ell} := \{J_{i, \mathrm{col}}^{1, L}, J_{i, \mathrm{col}}^{2, L}, J_{i, \mathrm{col}}^{3, L} \}, \quad \ell = L, L-1, \ldots, 0,
\end{align}
for each level $\ell$.

The system of the discretized direct-indirect mixed Burton--Miller equation of size $3N \times 3N$
is partitioned into blocks by the cells of the tree at leaf level $L$ as follows:
\begin{equation}
  \mqty[
  A_{11}^{L} & A_{12}^{L} & \cdots & A_{1p}^{L} \\
  A_{21}^{L} & A_{22}^{L} & \cdots & A_{2p}^{L} \\
  \vdots & \vdots &\ddots &\vdots \\
  A_{p1}^{L} & A_{p2}^{L} & \cdots & A_{pp}^{L}
  ]
  \mqty[
  x_1^{L} \\
  x_2^{L} \\
  \vdots \\
  x_p^{L}
  ]
  =
  \mqty[
  f_1^{L} \\
  f_2^{L} \\
  \vdots \\
  f_p^{L}
  ],
  \label{eq:block_linear}
\end{equation}
where each $A_{ij}^{L}$ corresponds to the interactions between cells
$J_{i, \mathrm{row}}^{L}$ and $J_{j, \mathrm{col}}^{L}$ for $i, j = 1, 2, \ldots, p$.
Similarly, $x_{i}$ corresponds to the solution on the integration points in $J_{i, \mathrm{col}}^{L}$
and $f_{i}$ corresponds to the discretized incident wave on the collocation points in $J_{i, \mathrm{row}}^{L}$.
More precisely, $A_{ij}^{L}$ is defined as
\begin{equation}
  A_{ij}^{L} :=
  \begin{bmatrix}
    \delta_{ij}\bm{I} & 0 & -\qty[\bm{S}_{k_1}]_{ij}^{L}
    \\
    0 & \delta_{ij}\bm{I} & -\frac{1}{\varepsilon_1}\left( \qty[\bm{D^*}_{k_1}]_{ij}^{L} + \delta_{ij}\frac{1}{2}\bm{I} \right)
    \\
    \qty[ \bm{D}_{k_0} ]_{ij}^{L} - \frac{1}{2}\delta_{ij}\bm{I} + \beta \qty[ \bm{N}_{k_0} ]_{ij}^{L} & -\varepsilon_0 \left\{ \qty[\bm{S}_{k_0}]_{ij}^{L} + \beta \left( \qty[\bm{D^*}_{k_0}]_{ij}^{L} +\frac{1}{2} \delta_{ij}\bm{I}\right)\right\} & 0
  \end{bmatrix}
  .
  \label{eq:block_Aij}
\end{equation}
Here, $A_{ij}^{L}$ is defined as
the interactions between cell indices $J_{i, \mathrm{row}}^L$ and $J_{j, \mathrm{col}}^L$.
For three row blocks of $A_{ij}^{L}$,
$J_{i, \mathrm{row}}^{1, L}$, 
$J_{i, \mathrm{row}}^{2, L}$, and 
$J_{i, \mathrm{row}}^{3, L}$ are used for the first, second, and third row blocks, respectively.
Similarly, for three column blocks of $A_{ij}^{L}$,
$J_{i, \mathrm{col}}^{1, L}$, 
$J_{i, \mathrm{col}}^{2, L}$, and 
$J_{i, \mathrm{col}}^{3, L}$ are used for the first, second, and third column blocks, respectively.
The notation $\qty[Z]_{ij}^{L}$ in \eqref{eq:block_Aij} stands for a block matrix of size $n \times n$
that is a subset of the whole $3N \times 3N$ coefficient matrix at leaf level $L$,
where $Z$ is $\bm{S}_{k}$, $\bm{D}_{k}$, $\bm{D^*}_{k}$, or $\bm{N}_{k}$.
In \eqref{eq:block_Aij}, $\bm{I}$ is an identity matrix of size $n$
and $\delta_{ij}$ is the Kronecker delta.
The size of identity matrix $\bm{I}$ is not specified as it can be determined from the context.
In \eqref{eq:block_linear}, $x_i \in \mathbb{C}^{3n}$ is defined as
\begin{equation}
  x_i^L :=
    \mqty[
      \bm{u}_{i}^L \\
      \bm{q}_{i}^L \\
      \bm{\varphi}_{i}^L
    ],
\end{equation}
where the vectors 
$\bm{u}_i^{L} \in \mathbb{C}^{n}$,
$\bm{q}_i^{L} \in \mathbb{C}^{n}$, and
$\bm{\varphi}_i^{L} \in \mathbb{C}^{n}$
are subsets of $\bm{u}$, $\bm{q}$, and $\bm{\varphi}$ corresponding to $J_{i, \mathrm{col}}^{L}$, respectively.
Similarly, $f_i \in \mathbb{C}^{3n}$ is defined as
\begin{equation}
  f_i^L :=
    \mqty[
      0 \\
      0 \\
      -(\bm{u^{\mathrm{in}}})_i^{L} - \beta (\bm{q^{\mathrm{in}}})_i^{L}
    ],
    \label{eq:rhs_f}
\end{equation}
where $0$ represents a zero vector of size $n$ and
$(\bm{u^{\mathrm{in}}})_i^{L}$ and $(\bm{q^{\mathrm{in}}})_i^{L}$ are subsets of 
$\bm{u^{\mathrm{in}}}$ and $\bm{q^{\mathrm{in}}}$ corresponding to $J_{i, \mathrm{row}}^L$, respectively.

We need low-rank approximation of all off-diagonal blocks of \eqref{eq:block_linear} such that
each $A_{ij}^{L}$ $(i \neq j)$ is approximated as
\begin{multline}
  A_{ij}^{L} \approx L_{i}^L S_{ij}^L R_{j}^L
  = \mqty[
    L_i^{1, L} & & \\
    & L_i^{2, L} & \\
    & & L_i^{3, L}
  ] \\
\mqty[
   &  & -\qty[\bm{S}_{k_1}]_{\tilde{i}\tilde{j}}^{L} \\
   &  & -\frac{1}{\varepsilon_1} \qty[\bm{D^*}_{k_1}]_{\tilde{i}\tilde{j}}^{L} \\
  \qty[ \bm{D}_{k_0} ]_{\tilde{i}\tilde{j}}^{L} + \beta \qty[ \bm{N}_{k_0} ]_{\tilde{i}\tilde{j}}^{L} & -\varepsilon_0 \left\{ \qty[\bm{S}_{k_0}]_{\tilde{i}\tilde{j}}^{L} + \beta \qty[\bm{D^*}_{k_0}]_{\tilde{i}\tilde{j}}^{L} \right\} & 
  ] \\
\mqty[
    R_i^{1, L} & & \\
    & R_i^{2, L} & \\
    & & R_i^{3, L}
  ], \quad i \neq j,
\label{eq:aij=usv}
\end{multline}
where $L_i^{1, L}$,  $L_i^{2, L}$, and $L_i^{3, L}$ $\in \mathbb{C}^{n \times k}$ and
$R_i^{1, L}$,  $R_i^{2, L}$, and $R_i^{3, L}$ $\in \mathbb{C}^{k \times n}$
are coefficients of a low-rank approximation calculated using the proxy method (described below).
Here, $k$ is the number of skeletons $k \ll n$.
$S_{ij}^L$ (of $L_{i}^L S_{ij}^L R_{j}^L$) in \eqref{eq:aij=usv} is the skeleton interactions at leaf level $L$ between
the skeleton indices $\tilde{J}_{i, \mathrm{row}}^{L}$ of $J_{i, \mathrm{row}}^{L}$ and $\tilde{J}_{j, \mathrm{col}}^{L}$ of $J_{j, \mathrm{col}}^{L}$.
The subscripts $\tilde{i}$ and  $\tilde{j}$ correspond to $\tilde{J}_{i, \mathrm{row}}^{L}$ and $\tilde{J}_{j, \mathrm{col}}^{L}$, respectively.
Here, note that $\tilde{J}_{i, \mathrm{row}}^{L}$ is the set $\{ \tilde{J}_{i, \mathrm{row}}^{1, L}, \tilde{J}_{i, \mathrm{row}}^{2, L}, \tilde{J}_{i, \mathrm{row}}^{3, L} \}$,
$\tilde{J}_{i, \mathrm{col}}^{L}$ is the set $\{ \tilde{J}_{i, \mathrm{col}}^{1, L}, \tilde{J}_{i, \mathrm{col}}^{2, L}, \tilde{J}_{i, \mathrm{col}}^{3, L} \}$,
and $L_{i}^{L}$ and $R_{i}^{L}$ are shared by the $i$-th row and column blocks, respectively, for $i = 1, 2, \ldots, p$.
The sequences $\tilde{J}_{i, \mathrm{row}}^{j, L}$ and $\tilde{J}_{i, \mathrm{col}}^{j, L}$, for $j = 1, 2, 3$, are subsequences of 
$J_{i, \mathrm{row}}^{j, L}$ and $J_{i, \mathrm{col}}^{j, L}$, respectively.

\begin{figure}[tb]
  \centering
  \includegraphics[width=0.4\linewidth]{./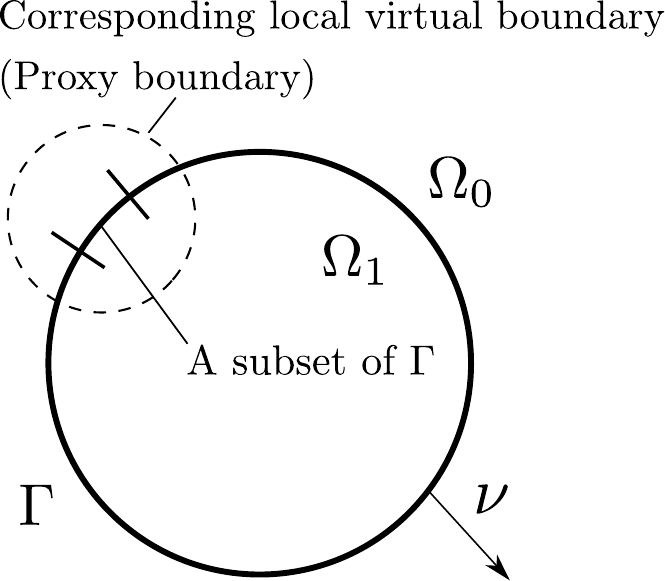}
  \caption{
    Local virtual boundary (proxy boundary) in the proxy method
  }
  \label{fig:proxy}
\end{figure}
We now describe the proxy method for computing the skeleton indices $\tilde{J}_{i, \mathrm{row}}^{L}$ and $\tilde{J}_{i, \mathrm{col}}^{L}$,
and the corresponding coefficients $L_{i}^L$ and $R_{i}^L$ at the leaf level.
The proxy method in this study is similar to the original method \cite{MARTINSSON20051}.
Here, we explain the difference from the original method by clarifying what are calculated as the interactions
between a local virtual boundary and a subset of $\Gamma$ in the proposed boundary integral equation.
Under the Nystr\"om discretization in this study,
a subset of $\Gamma$ corresponds to the coordinates of the collocation and integration points in cell $J_{i, \mathrm{row}}^{L}$ or $J_{i, \mathrm{col}}^{L}$, respectively.
We consider a local virtual boundary as a circle enclosing this subset of $\Gamma$, as shown in Fig. \ref{fig:proxy}.
The diameter of this local virtual boundary is set to about 1.5 times the diameter of the subset of $\Gamma$ corresponding to $J_{i, \mathrm{row}}^{L}$ or $J_{i, \mathrm{col}}^{L}$.
Set some equispaced points on the local virtual boundary.
Let $J_{i, \mathrm{row}}^{\prime L}$ be an index set corresponding to $J_{i, \mathrm{col}}^{L}$, which consists of the sum of some equispaced points on the local virtual boundary and the collocation points in the adjacent cell of $J_{i, \mathrm{col}}^{L}$ that are enclosed by the local virtual boundary.
Similarly, let $J_{i, \mathrm{col}}^{\prime L}$ be an index set corresponding to $J_{i, \mathrm{row}}^{L}$, which consists of the sum of some equispaced points on the local virtual boundary and the integration points in the adjacent cell of $J_{i, \mathrm{row}}^{L}$ that are enclosed by the local virtual boundary.
Assume that the number of elements in $J_{i, \mathrm{col}}^{\prime L}$ is equal to that in $J_{i, \mathrm{row}}^{\prime L}$ for $i = 1, 2, \ldots, p$.
Let $n^{\prime}$ be the number of elements in $J_{i, \mathrm{col}}^{\prime L}$ and $J_{i, \mathrm{row}}^{L}$.
Since, for a fixed $i$, the computational load for calculating
the interactions between $J_{i, \mathrm{row}}^{L}$ and $J_{j, \mathrm{col}}^{L}$ for all $j \in \{1, 2, \ldots, p \}$ is large,
we consider replacing this computation with the interactions between $J_{i, \mathrm{row}}^{L}$ and $J_{i, \mathrm{col}}^{\prime, L}$
by the proxy method.
This replacement reduces the required computation from global interactions to local interactions.
These local interactions are used only for the construction of the low-rank approximations of off-diagonal blocks.
The local interactions that should be evaluated are summarized in Table \ref{tb:low-rank}.
In Table \ref{tb:low-rank}, the subscript $i^{\prime} i$, 
such as in $\qty[ \bm{D}_{k_0} ]_{i^{\prime} i}^{L}$, 
indicates the interaction between $J_{i, \mathrm{row}}^{\prime L}$ and $J_{i, \mathrm{col}}^{L}$ for a fixed $i \in \{ 1, 2, \ldots, p \}$.
Similarly, the subscript $i i^{\prime}$, 
such as in $\qty[ \bm{S}_{k_1} ]_{i i^{\prime}}^{L}$,
indicates the interaction between $J_{i, \mathrm{row}}^{L}$ and $J_{i, \mathrm{col}}^{\prime L}$ for a fixed $i \in \{ 1, 2, \ldots, p \}$.
The local interactions in Table \ref{tb:low-rank} are factorized using column-pivoted QR factorization
to obtain the pair of low-rank coefficients $R_{i}^{j, L}$ and indices $\tilde{J}_{i, \mathrm{col}}^{j, L}$,
and the pair of low-rank coefficients $L_{i}^{j, L}$ and indices $\tilde{J}_{i, \mathrm{row}}^{j, L}$ \cite{cheng2005compression}, for $j = 1, 2, 3$.
The indices $\tilde{J}_{i, \mathrm{row}}^{j, L}$ and $\tilde{J}_{i, \mathrm{col}}^{j, L}$, $j = 1, 2, 3$, are obtained as the first $k$ pivots in column-pivoted QR factorization.

\begin{table}[h]
  \caption{Target matrix for column-pivoted QR factorization in discretized direct-indirect Burton--Miller equation for obtaining coefficient matrices $R_{i}^{j, L}$ and $L_{i}^{j, L}$, and 
    skeleton indices $\tilde{J}_{i, \mathrm{col}}^{L} = \qty{ \tilde{J}_{i, \mathrm{col}}^{1, L},  \tilde{J}_{i, \mathrm{col}}^{2, L}, \tilde{J}_{i, \mathrm{col}}^{3, L}}$ and $\tilde{J}_{i, \mathrm{row}}^{L} = \qty{ \tilde{J}_{i, \mathrm{row}}^{1, L},  \tilde{J}_{i, \mathrm{row}}^{2, L}, \tilde{J}_{i, \mathrm{row}}^{3, L}}$,
    for $i = 1, 2, \ldots, p$ and for $j = 1, 2, 3$;
    the superscript $H$ indicates the complex conjugate transpose
  }
  \label{tb:low-rank}%
  \begin{tabular}{llll}
    \toprule
    Resulting matrix & Resulting skeleton indices & Target for column-pivoted QR factorization \\
    \midrule
    $R_{i}^{1, L}$ & $\tilde{J}_{i, \mathrm{col}}^{1, L}$ & $\qty[ \bm{D}_{k_0} ]_{i^{\prime} i}^{L} + \beta \qty[ \bm{N}_{k_0} ]_{i^{\prime} i}^{L}$ & $(\in \mathbb{C}^{n^{\prime} \times n})$ \\
    $R_{i}^{2, L}$ & $\tilde{J}_{i, \mathrm{col}}^{2, L}$& $\qty[ \bm{S}_{k_0}]_{i^{\prime} i}^{L} + \beta [\bm{D}_{k_0}^{\bm *} ]_{i^{\prime} i}^{L}$ & $(\in \mathbb{C}^{n^{\prime} \times n})$ \\
    $R_{i}^{3, L}$ & $\tilde{J}_{i, \mathrm{col}}^{3, L}$ & $\qty[\bm{S}_{k_1}]_{i^{\prime} i}^{L}$ & $(\in \mathbb{C}^{n^{\prime} \times n})$ \\
    $L_{i}^{1, L}$ & $\tilde{J}_{i, \mathrm{row}}^{1, L}$ & $\qty( [\bm{S}_{k_1}]_{i i^{\prime}}^{L})^{H}$ & $(\in \mathbb{C}^{n^{\prime} \times n})$ \\
    $L_{i}^{2, L}$ & $\tilde{J}_{i, \mathrm{row}}^{2, L}$ & $\qty( [\bm{D^*}_{k_1}]_{i i^{\prime}}^{L})^{H}$ & $(\in \mathbb{C}^{n^{\prime} \times n})$ \\
    $L_{i}^{3, L}$ & $\tilde{J}_{i, \mathrm{row}}^{3, L}$ & $ \mqty( \qty[ \bm{D}_{k_0} ]_{i i^{\prime}}^{L} + \beta \qty[ \bm{N}_{k_0} ]_{i i^{\prime}}^{L} & \quad -\varepsilon_0 \left\{ \qty[\bm{S}_{k_0}]_{i i^{\prime}}^{L} + \beta \qty[\bm{D^*}_{k_0}]_{i i^{\prime}}^{L} \right\})^{H} $ & $(\in \mathbb{C}^{2n^{\prime} \times n})$ \\
    \botrule
  \end{tabular}
\end{table}

In this low-rank approximation, the discretized direct-indirect mixed Burton--Miller equation may be faster than
the discretized ordinary Burton--Miller equation considering the number of required floating-point operations.
It is known that QR factorization based on the Householder reflection for a matrix of size $p \times q$ requires
approximately $2 p q^2 - \frac{2}{3} q^3$ floating-point operations.
From the size of the target matrices in Table \ref{tb:low-rank},
we can see that the discretized direct-indirect mixed Burton--Miller equation needs
five matrices of size $n^{\prime} \times n$ and a matrix of size $2n^{\prime} \times n$,
while the discretized ordinary Burton--Miller equation needs
four matrices of size $2n^{\prime} \times n$ \cite{Matsumoto2019burton}.
Thus,
the number of floating-point operations required to compute $L_{i}^L$ and $R_{i}^L$ for each $i$ can be estimated approximately as
\begin{align}
  &14 n^{\prime} n^2 - \frac{12}{3} n^3 &\, &\text{for the discretized direct-indirect mixed Burton--Miller equation, and} \\
  &16 n^{\prime} n^2 - \frac{8}{3} n^3 &\, &\text{for the discretized ordinary Burton--Miller equation}.
\end{align}
From this estimate, we see that the difference of the number of floating-point operations is $2 n^{\prime} n^2 + (4/3) n^3$, which is the speedup factor of the discretized direct-indirect mixed Burton--Miller equation against to the ordinary formulation.
It is confirmed in Section \ref{sec:numerical_examples} that the former formulation is indeed faster.

The multi-level low-rank approximation using the proxy method can be performed at upper levels in a similar way to that at the leaf level.
The details of the multi-level algorithm are described in \cite{MATSUMOTO2025106148}.
We use the multi-level algorithm in the numerical examples.

\subsubsection{Compression technique for linear equations}
We can compress \eqref{eq:dis_mixedBM} to a system of linear equations with a small number of degrees of freedom using a compression technique based on the shared coefficient weak admissibility condition discussed in the previous subsection.
As the fast direct solver without this compression scheme is the same as the method in \cite{MATSUMOTO2025106148},
we omit its explanation in this article.
The off-diagonal low-rank approximated blocked linear equations given by
\begin{align}
  &\mqty[
  A_{11}^{L} & L_{1}^L S_{12}^L R_{2}^L & \cdots & L_{1}^L S_{1p}^L R_{p}^L \\
  L_{2}^L S_{21}^L R_{1}^L & A_{22}^{L} & \cdots & L_{2}^L S_{2p}^L R_{p}^L \\
  \vdots & \vdots &\ddots &\vdots \\
  L_{p}^L S_{p1}^L R_{1}^L & L_{p}^L S_{p2}^L R_{2}^L & \cdots & A_{pp}^{L}
  ]
  \mqty[
  x_1^{L} \\
  x_2^{L} \\
  \vdots \\
  x_p^{L}
  ]
  =
  \mqty[
  f_1^{L} \\
  f_2^{L} \\
  \vdots \\
  f_p^{L}
  ],
\end{align}
whose degrees of freedom is $3np = 3N$,
are compressed to 
\begin{multline}
  \mqty[
    \qty{R_1^{L} (A_{11}^{L})^{-1} L_1^{L}}^{-1} & S_{12}^{L} & \cdots & S_{1p}^{L} \\
    S_{21}^{L} & \qty{R_2^{L} (A_{22}^{L})^{-1} L_2^{L}}^{-1} & \cdots & S_{2p}^{L} \\
    \vdots & \vdots &\ddots &\vdots \\
    S_{p1}^{L} & S_{p2}^{L} & \cdots & \qty{R_p^{L} (A_{pp}^{L})^{-1} L_p^{L}}^{-1}
  ]
  \mqty[
    R_1^{L} x_1^{L} \\
    R_2^{L} x_2^{L} \\
    \vdots \\
    R_p^{L} x_p^{L}
  ]
  \\ =
  \mqty[
    \qty{R_1^{L} (A_{11}^{L})^{-1} L_1^{L}}^{-1} R_{1}^{L}(A_{11}^{L})^{-1}f_1^{L} \\
    \qty{R_2^{L} (A_{22}^{L})^{-1} L_2^{L}}^{-1} R_{2}^{L}(A_{22}^{L})^{-1}f_2^{L} \\
    \vdots \\
    \qty{R_p^{L} (A_{pp}^{L})^{-1} L_p^{L}}^{-1} R_{p}^{L}(A_{pp}^{L})^{-1}f_p^{L}
  ],
  \label{eq:compressed}
\end{multline}
whose degrees of freedom is $3kp$, with  $k \ll n$.
In this compression, the efficient LU factorization in Theorem \ref{thm:LU} and Corollary \ref{lem:block_LU}
can be applied to compute $(A_{ii}^{L})^{-1} f_{i}^{L}$ and $(A_{i}^{L})^{-1} L_{i}^{L}$, respectively, for $i = 1, 2, \ldots, p$.

\begin{remark}
  Note that this efficient LU factorization can be performed only at the leaf level since the diagonal matrix $A_{ii}^{\ell}$ at upper levels $\ell$, $\ell = L - 1, L -2, \ldots, 0$, does not have an appropriate sub-block alignment to be LU factorized as done in Theorem \ref{thm:LU}.
\end{remark}

\section{Numerical examples} \label{sec:numerical_examples}
This section demonstrates the performance of the proposed formulation, namely the direct-indirect mixed Burton--Miller equation \eqref{eq:mixed_BM_equation}.
The direct solvers based on Theorem \ref{thm:LU} and Corollaries \ref{lem:block_LU} and \ref{lem:for_ssm_RHS}
are used in Sections \ref{sec:LU_numerical}, \ref{sec:fastLU}, and \ref{sec:ssm}, respectively.
The numerical examples were calculated using a single node of TSUBAME4.0,
which has two 96-core AMD EPYC 9654 CPUs (total number of cores: 192;
maximum frequency: 3708 MHz) and 768 GB of 0.92-TB/s DDR5 memory, at the Institute of Science Tokyo.
The numerical computation program was implemented using \texttt{C++} and the linear algebra package \texttt{Eigen3} \cite{eigenweb}.
The implementation was compiled using \texttt{g++} and parallelized using \texttt{OpenMP}.
In the program, the Hankel functions are calculated using the fixed version of \texttt{SLATEC} to be thread-safe \cite{lloda_slatec-bessel-cpp_2024}.

\begin{remark}
  The problems discussed in Sections \ref{sec:LU_numerical}, \ref{sec:fastLU}, and \ref{sec:ssm}
  differ in the granularity of their parallelization units.
  In these numerical examples, the for-loop is executed in parallel using \texttt{OpenMP}.
  When there is a nested for-loop,
  it is generally more efficient to parallelize the outer for-loop.
  In Section \ref{sec:LU_numerical}, the construction of the coefficient matrix is parallelized.
  In Section \ref{sec:fastLU}, the compression of the diagonal block of the coefficient matrix is parallelized with respect to the cells of the tree.
  In Section \ref{sec:ssm}, the step used to find a solution for the linear equations on each integration point on the path of the integral of the Sakurai--Sugiura projection method \cite{asakura2009numerical} is parallelized.
  Therefore, the scaling efficiency varies among the numerical examples.
\end{remark}

\subsection{Direct solver based on LU factorization} \label{sec:LU_numerical}
Using Theorem \ref{thm:LU}, we expect to solve
the discretized direct-indirect mixed Burton--Miller equation faster than
the discretized ordinary Burton--Miller equation.
The parameters used for solving Helmholtz transmission problems via boundary integral equations are listed in Table \ref{tb:lu_parameters}.
Recall that $N$ is the number of the quadrature points of the Nystr\"om method,
so that the degrees of freedom are $3N$ and $2N$ in the mixed and ordinary formulations, respectively.
\begin{table}[h]
  \caption{
    Parameters used in Section \ref{sec:LU_numerical}
  }
  \label{tb:lu_parameters}
  \begin{tabular}{ll}
    \toprule
    Parameter & Value \\
    \midrule
    Scatterer shape & Circle with radius 1.0 \\
    $\varepsilon_0$ & 1 \\
    Pair $\{ \omega, \varepsilon_1 \}$ & \{1, 2\}, \{2, 5\}, \{10, 10\} \\
    $N$ & $100 \times 2^{i}$ for $i = 1, 2, \ldots, 8$ \\
    Incident wave & Plane wave $e^{i k_0 x_1}$ \\
    \botrule
  \end{tabular}
\end{table}

\begin{figure}[htb]
  \centering
  \includegraphics[width=1.0\linewidth]{./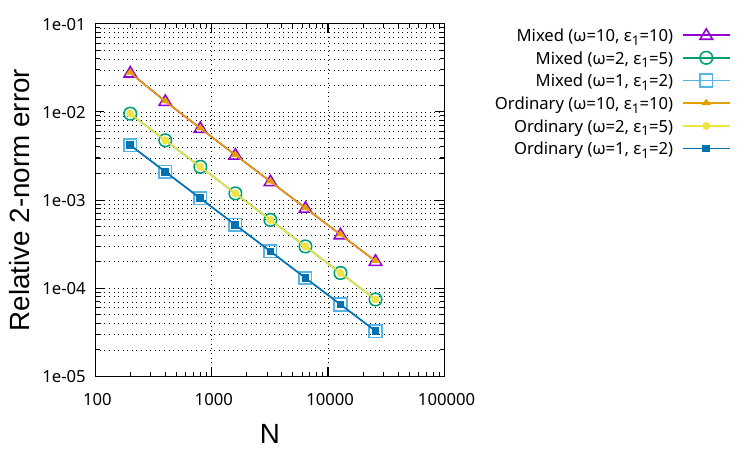}
  \caption{
    Relative 2-norm errors of numerical solution of each formulation with respect to analytical solution obtained using direct solver based on LU factorization;
    the figure presents two formulations for each pair of frequency ($\omega$) and internal material parameter ($\varepsilon_1$), namely the discretized direct-indirect mixed Burton--Miller equation (``Mixed'') and the discretized ordinary Burton--Miller equation (``Ordinary'')
  }
  \label{fig:convLU_error}
\end{figure}
In this setting, we can calculate the analytical solution of corresponding Helmholtz transmission problems.
We first compare the obtained numerical solution for $\bm u$ and $\bm q$ to the analytical solution on $\Gamma$.
Let $u_{\mathrm{ana}}$ be the analytical solution vector on the quadrature points of the Nystr\"om discretization.
Let $q_{\mathrm{ana}}$ be the normal derivative of $u_{\mathrm{ana}}$.
We evaluate the relative 2-norm error as
\begin{equation}
  \frac{\norm{\qty[\bm{u}, \bm{q}]^T - \qty[u_{\mathrm{ana}}, q_{\mathrm{ana}}]^T}_2}{\norm{\qty[u_{\mathrm{ana}}, q_{\mathrm{ana}}]^T}_2},
  \label{eq:norm}
\end{equation}
where $\norm{\cdot}_2$ is the Euclidean norm.
Fig. \ref{fig:convLU_error} shows the relative 2-norm error with respect to the analytical solution on the quadrature points.
We can see that the numerical solutions obtained from both formulations well match the analytical solutions.
Furthermore, they converge to the analytical solution at a rate of $O(N^{-1})$, which is the same convergence rate found in an error analysis of the zeta-corrected quadrature \cite{wu2023unified}.

Fig. \ref{fig:convLU_time} shows the computational time versus $N$ for various values of $\omega$ and $\varepsilon_1$.
As shown, the discretized direct-indirect mixed Burton--Miller equation solves the problem faster than does
the discretized ordinary Burton--Miller equation in most cases.
The results in Fig. \ref{fig:convLU_time} are summarized in Table \ref{tb:speedup}.
Note that this table also includes the results for the fast direct solver, which are discussed later.
From Table \ref{tb:speedup}, the proposed formulation, namely the direct-indirect mixed Burton--Miller equation, is more efficient than the ordinary equation
despite the proposed formulation having a $1.5$ times greater number of degrees of freedom.
The expected speedup for the solver was about 12.5\% (see Section \ref{sec:LU}),
but the actual speedup was about 40\% over the entire numerical process.
Note that the speedup for the given code is machine-dependent.
\begin{figure}[htb]
  \centering
  \begin{minipage}[t]{0.49\textwidth}
    \centering
    \includegraphics[width=1.0\linewidth]{./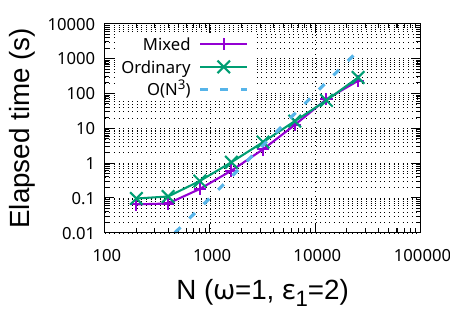}
  \end{minipage}
  \hfill
  \begin{minipage}[t]{0.49\textwidth}
    \centering
    \includegraphics[width=1.0\linewidth]{./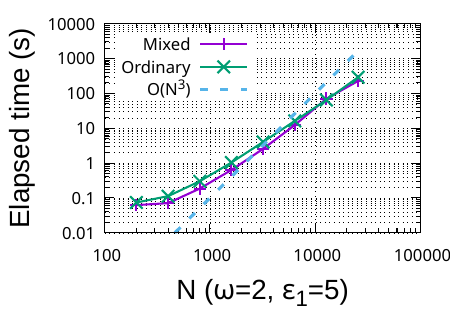}
  \end{minipage}
  \includegraphics[width=0.5\linewidth]{./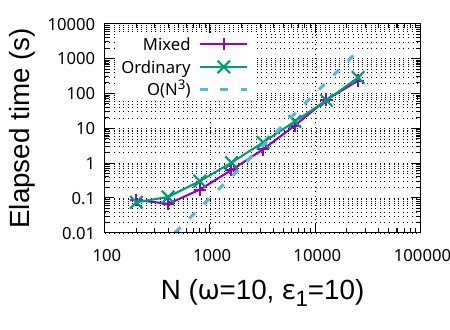}
  \caption{
    Computational time versus $N$ obtained using LU factorization for solving discretized direct-indirect mixed Burton--Miller equation (``Mixed'') and
    discretized ordinary Burton--Miller equation (``Ordinary'') in each pair of frequency $\omega$ and material parameter inside $\varepsilon_1$;
    only the mixed equation benefits from Theorem \ref{thm:LU}
  }
  \label{fig:convLU_time}
\end{figure}

\begin{table}[h]
  \caption{
    Average speedup of computational time of discretized direct-indirect mixed Burton--Miller equation $(\mathrm{Time}_{\mathrm{mixed}})$ compared to discretized ordinary Burton--Miller equation $(\mathrm{Time}_{\mathrm{ordinary}})$ for direct solvers in Sections \ref{sec:LU_numerical} and \ref{sec:fastLU}
  }
  \label{tb:speedup}
  \begin{tabular}{llll}
    \toprule
    Solver & $\omega$ & $\varepsilon_1$ & Average speedup $\qty(\frac{\mathrm{Time}_{\mathrm{ordinary}}}{\mathrm{Time}_{\mathrm{mixed}}})$ \\
    \midrule
    LU factorization & 1 & 2 & 1.444 \\
    & 2 & 5 & 1.390 \\
    & 10 & 10 & 1.383 \\
    Fast direct solver & 1 & 2 & 1.013 \\
    & 2 & 5 & 1.053 \\
    & 10 & 10 & 1.013 \\
    \botrule
  \end{tabular}
\end{table}

\subsection{Fast direct solver} \label{sec:fastLU}
We next discuss the performance for the fast direct solver described in Section \ref{sec:fds}.
For comparison, we use the fast direct solver for the discretized ordinary Burton--Miller equation \eqref{eq:dis_BM},
which is the same as the method in \cite{Matsumoto2019burton}
if the periodic Green's function is replaced by the fundamental solution.
In the multi-level algorithm of the fast direct solver, 
the number of skeletons increases with each level in the binary tree
by $1.15$ times that for the previous level in this numerical example.
For example, if the number of skeletons at the leaf level is 30,
it increases to 34 at the level above the leaf level,
and then to 39 at the level above that.
Table \ref{tb:fast_lu_parameters} shows the parameters used in Section \ref{sec:fastLU}.
In the table, the number of initial skeletons refers to the number of skeletons at the leaf level of the tree.
The material parameters and scatterer shape are the same as those in the previous section.
\begin{table}[h]
  \caption{
    Parameters used in Section \ref{sec:fastLU}
  }
  \label{tb:fast_lu_parameters}
  \begin{tabular}{ll}
    \toprule
    Parameter & Value \\
    \midrule
    Scatterer shape & Circle with radius 1.0 \\
    $\varepsilon_0$ & 1 \\
    Pair $\{ \omega, \varepsilon_1 \}$ & \{1, 2\}, \{2, 5\}, \{10, 10\} \\
    $N$ & $100 \times 2^{i}$ for $i = 10, 11, \ldots, 15$ \\
    Incident wave & Plane wave $e^{i k_0 x_1}$ \\
    Number of initial skeletons & 40 \\
    \botrule
  \end{tabular}
\end{table}

Fig. \ref{fig:fastLU_error} shows the relative 2-norm error evaluated using \eqref{eq:norm} with respect to $N$.
We can see that the numerical solutions obtained from both formulations well match the analytical solutions even when the fast direct solver is used.
Similar to the result in the previous section, the formulations almost converge to the analytical solution at a rate of $O(N^{-1})$.
It is conjectured that the slight deviation of relative errors from the expected $O(N^{-1})$ behavior at $N = 100 \times 2^{15} = 3{,}276{,}800$ is caused by numerical error in calculating the Hankel function with a small argument.
\begin{figure}[htb]
  \centering
  \includegraphics[width=1.0\linewidth]{./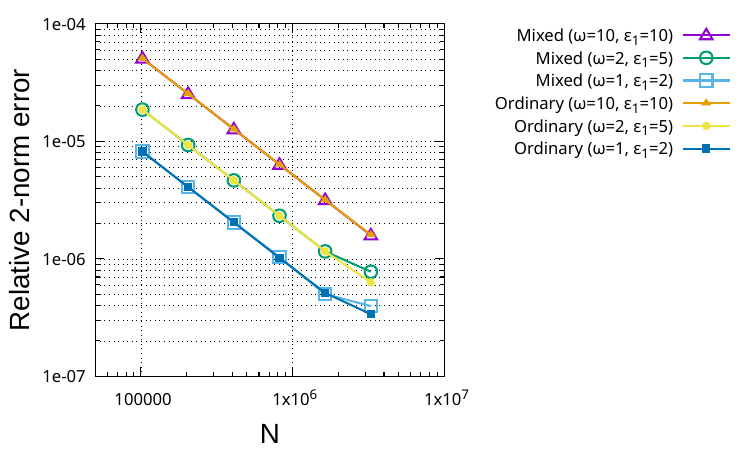}
  \caption{
    Relative 2-norm errors of numerical solution of each formulation with respect to analytical solution obtained using fast direct solver;
    the figure presents two formulations for each pair of frequency ($\omega$) and internal material parameter ($\varepsilon_1$), namely the discretized direct-indirect mixed Burton--Miller equation (``Mixed'') and the discretized ordinary Burton--Miller equation (``Ordinary'')
  }
  \label{fig:fastLU_error}
\end{figure}

Fig. \ref{fig:fastLU_time} shows the computational time versus $N$ for various values of $\omega$ and $\varepsilon_1$.
As shown, the discretized direct-indirect mixed Burton--Miller equation solves the problem slightly faster than does
the discretized ordinary Burton--Miller equation.
The results in Fig. \ref{fig:fastLU_time} are summarized in Table \ref{tb:speedup}.
The fast direct solver based on the proposed formulation achieved only a slight speedup compared to the direct solver based on LU factorization.
Nevertheless, the proposed mixed formulation has advantages since it can solve the problem faster with comparable accuracy despite the increased degrees of freedom of the linear equations.
\begin{figure}[htb]
  \centering
  \begin{minipage}[t]{0.49\textwidth}
    \centering
    \includegraphics[width=1.0\linewidth]{./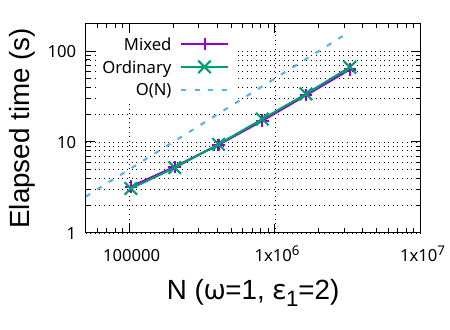}
  \end{minipage}
  \hfill
  \begin{minipage}[t]{0.49\textwidth}
    \centering
    \includegraphics[width=1.0\linewidth]{./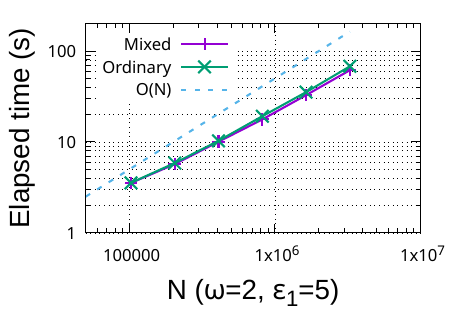}
  \end{minipage}
  \includegraphics[width=0.5\linewidth]{./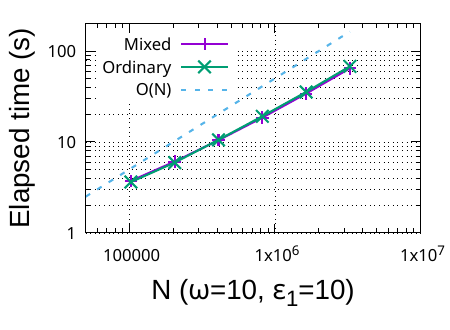}
  \caption{
    Computational time versus $N$ obtained using fast direct solver in Section \ref{sec:fds} for solving discretized direct-indirect mixed Burton--Miller equation (``Mixed'') and
    discretized ordinary Burton--Miller equation (``Ordinary'') in each pair of frequency $\omega$ and material parameter inside $\varepsilon_1$
  }
  \label{fig:fastLU_time}
\end{figure}

\subsection{Calculation of eigenvalues of boundary integral equations} \label{sec:ssm}
We now demonstrate a numerical example using Corollary \ref{lem:for_ssm_RHS}.
It is known that the Helmholtz transmission problem \eqref{eq:problem} may have
a non-trivial solution when we allow a complex $\omega$, provided that $\omega$ is not a negative real number.
We refer to the eigenvalues as a set of $\omega$ for which the Helmholtz transmission problem \eqref{eq:problem} is not uniquely solvable.
Moreover, boundary integral equations, including \eqref{eq:mixed_BM_equation} and \eqref{eq:ordi_BM_equation},
have not only the eigenvalues of the Helmholtz transmission problem but also additional eigenvalues.
Therefore, we also refer to the eigenvalues as a set of $\omega$ for which the boundary integral equations \eqref{eq:mixed_BM_equation} and \eqref{eq:ordi_BM_equation} are not uniquely solvable.
Since $\omega$ affects the boundary integral operator nonlinearly,
these eigenvalues can only be found by solving a nonlinear eigenvalue problem.
We use the Sakurai--Sugiura projection method \cite{asakura2009numerical} to solve this nonlinear eigenvalue problem by converting it to a generalized eigenvalue problem and then finding the eigenvalues of a region bounded by a closed curve in the complex plane.
In the Sakurai--Sugiura projection method,
we repeatedly solve the discretized direct-indirect Burton--Miller equation \eqref{eq:dis_mixedBM} or
the discretized ordinary Burton--Miller equation \eqref{eq:dis_BM}.
These solutions are required for frequencies $\omega$ on a closed curve in the complex plane, using randomly generated right-hand side vectors.
Thus, we can use Corollary \ref{lem:for_ssm_RHS} for the discretized direct-indirect Burton--Miller equation.

For this numerical example, we use a circular scatterer.
From an analysis for a circular scatterer with radius $a$ \cite{matsumoto2025injectivity},
the eigenvalues of both \eqref{eq:mixed_BM_equation} and \eqref{eq:ordi_BM_equation} are the same;
they are given as the zeros with respect to $\omega$ of the following two relations:
\begin{align}
  -\varepsilon_{0} k_{1} H_{n}^{(1)}(k_{0} a_{}) {J_{n}}^{\prime}(k_{1} a_{}) + \varepsilon_{1} k_{0} {H_{n}^{(1)}}^{\prime}(k_{0} a_{}) {J_{n}}^{}(k_{1} a_{}) = 0, \\
  H_{n}^{(1)}(k_{1} a_{}) \qty(J_{n}(k_{0} a_{}) + \alpha k_{0} {J_{n}}^{\prime}(k_{0} a_{})) = 0,
\end{align}
for some integer $n \in \mathbb{Z}$.
Note that $k_j = \omega \sqrt{\varepsilon_j}$ for $j = 0, 1$.
The left-hand side of the second equation above vanishes if $\omega$ satisfies $H_{n}^{(1)}(k_{1} a_{}) = 0$.
Some of the zeros of the Hankel function have been tabulated \cite{doring1966complex}.
Therefore, we verify whether the Sakurai--Sugiura projection method for both formulations
can determine the zeros of the Hankel function.
In this verification, we use the direct solver used in Section \ref{sec:LU_numerical} considering the results in Table \ref{tb:speedup}.
We used the lowest-order zeta-corrected quadrature in Section \ref{sec:LU_numerical} in view of the extension to the fast direct solver that requires the weak admissibility condition.
Note that the use of Theorem \ref{thm:LU} is not restricted to the lowest order of the zeta-corrected quadrature.
We use a high-order zeta-corrected quadrature, whose order is $31$, in this numerical example \cite{wu2023unified}.
The parameters used in the Sakurai--Sugiura projection method are listed in Table \ref{tb:ssm_parameters}.
In this table, the parameters are selected such that both formulations of the boundary integral equations have zeros of $H_{n}^{(1)}(\omega)$ as eigenvalues.
\begin{table}[h]
  \caption{
    Parameters used in Sakurai--Sugiura projection method (SSM)
  }
  \label{tb:ssm_parameters}
  \begin{tabular}{ll}
    \toprule
    Parameter & Value \\
    \midrule
    Integral path of SSM & Square with side length 0.1 centered on target eigenvalue \\
    Number of integration points of SSM & 48 per side \\
    Number of moments of SSM & 4 \\
    Number of right-hand sides of blocked SSM & 4 \\
    Scatterer shape & Circle with radius 0.5 \\
    $\varepsilon_0$ & 1 \\
    $\varepsilon_1$ & 4 \\
    $N$ & 4000 \\
    \botrule
  \end{tabular}
\end{table}

The verification results are shown in Table \ref{tb:ssm_eigenvalues}.
We can see that the mixed formulation can compute almost the same eigenvalues as those computed by the ordinary formulation.
These computed eigenvalues well match the reference ones.
Furthermore, computation using the mixed formulation is approximately 10\% faster than that using the ordinary formulation.
\begin{table}[h]
  \caption{
    Part of eigenvalues of boundary integral equations \eqref{eq:mixed_BM_equation} (``Mixed'')
    and \eqref{eq:ordi_BM_equation} (``Ordinary'') computed using Sakurai--Sugiura projection method;
    the eigenvalue of the number 1 corresponds to the zero of $H_{2}^{(1)}(z)$ and $H_{-2}^{(1)}(z)$ and
    the eigenvalue of the number 2 corresponds to the zero of $H_{3}^{(1)}(z)$ and $H_{-3}^{(1)}(z)$;
    the reference value \cite{doring1966complex} was computed only for $H_{i}^{(1)}(z)$, for a positive integer $i$, since the zeros of $H_{i}^{(1)}(z)$ and $H_{-i}^{(1)}(z)$ are the same
  }
  \label{tb:ssm_eigenvalues}
  \begin{tabular}{lllll}
    \toprule
    No. &Formulation & Time (s) & Real part of eigenvalue & Imaginary part of eigenvalue \\
    \midrule
    1 & Mixed \eqref{eq:dis_mixedBM} & 98.895 & 0.42948496520839791   &    $-$1.2813737976521153 \\
    & & & 0.42948496521037804   &    $-$1.2813737976570456 \\
    & Ordinary \eqref{eq:dis_BM} & 115.910 &   0.42948496520023421   &    $-$1.2813737976543407 \\
    & & & 0.42948496521115348   &    $-$1.2813737976568351 \\
    & (Reference, Doring \cite{doring1966complex}) & -- & $0.4294849652$  & $-1.2813737977$ \\
    \hline
    2 & Mixed \eqref{eq:dis_mixedBM} & 114.206 & 1.3080120322683002    &   $-$1.6817888047453846  \\ 
    & & & 1.3080120323125690   &    $-$1.6817888047547371 \\
    & Ordinary \eqref{eq:dis_BM} & 125.449 & 1.3080120322740318   &    $-$1.6817888047448746 \\
    & & & 1.3080120322880620   &    $-$1.6817888047301592 \\
    & (Reference, Doring \cite{doring1966complex}) & -- & $1.3080120323$  & $-1.6817888047$ \\
    \botrule
  \end{tabular}
\end{table}

\section{Concluding remarks} \label{sec:conclusion}
This paper proposed a direct-indirect mixed Burton--Miller boundary integral equation
for solving Helmholtz transmission problems.
This formulation has three unknowns, one more than the number of unknowns for the ordinary formulation.
However, it was demonstrated by numerical examples that a direct solver based on the discretized direct-indirect Burton--Miller boundary integral equation is faster than that based on the ordinary formulation
owing to the sparse arrangement of integral operators.
The techniques used for speeding up computations were explained in detail and
the well-posedness of the proposed boundary integral equation was proven in the framework of the mapping property of boundary integral operators in H\"older space.

The proposed mixed formulation has some limitations.
It has a larger number of degrees of freedom than that of the ordinary formulation and therefore requires a greater amount of memory.
This means that for a given computer system, the size of the problem that can be solved is slightly smaller.
To address this issue, we considered applying the proposed mixed formulation to the fast direct solver.
This problem occurs since the implementation remembers the zero blocks of the coefficient matrix.
If we modify the implementation so that it does not remember these zero blocks,
this issue should be mostly solved.

In future work, to speed up computations, it will be worthwhile to investigate whether the proposed formulation remains competitive with existing formulations when high-performance computing techniques, such as distributed memory parallelization and offloading to accelerators (e.g., graphics processing units), are fully utilized.
Although the mathematical analysis would become more difficult, it may be possible to extend the proposed efficient direct solvers to transmission problems governed by other partial differential equations, such as Maxwell's equations or the Navier--Cauchy equation.
In this paper, the proposed boundary integral equation is well-posed if the boundary is of class $C^2$.
The well-posedness of the proposed boundary integral equation is expected to be extended to the Lipschitz boundary using the framework in Sobolev space.

\bmhead{Acknowledgements}
This work was supported by
the Japan Society for the Promotion of Science under KAKENHI grant numbers 23H03413, 23H03798, 24K17191, and 24K20783.
This study made use of the computational resources of TSUBAME4.0 at the Institute of Science Tokyo
provided through the projects 
``Joint Usage/Research Center for Interdisciplinary Large-scale Information Infrastructures (JHPCN)''
and ``High Performance Computing Infrastructure (HPCI)'' in Japan (project ID jh250045).


\end{document}